\documentclass[10pt]{amsart}
\usepackage{a4,amssymb,times,amsmath,cancel}
\usepackage{graphicx,color,epsfig}

\usepackage[all]{xy}
\usepackage{tikz}
\usetikzlibrary{calc}

\def\ot{\leftarrow}

\let\cal\mathcal

\def\cC{{\cal C}}
\def\cD{{\cal D}}

\def\cI{{\cal I}}
\def\cJ{{\cal J}}

\def\cP{{\cal P}}
\def\cQ{{\cal Q}}
\def\cR{{\cal R}}
\def\cS{{\cal S}}

\def\cU{{\cal U}}

\def\cX{{\cal X}}
\def\cY{{\cal Y}}

\def\pZ{{\cal Z}}
\def\eps{{\epsilon}}

%
%
\let\blb\mathbb

\def \PP{{\blb P}}

\def \Rl{{\blb R}}

\newcommand{\se}[1]{\begin{equation*}\begin{split}#1\end{split}\end{equation*}}

\newcommand{\C}{\mathbb{C}}
\newcommand{\N}{\mathbb{N}}
\newcommand{\Z}{\mathbb{Z}}
\newcommand{\R}{\mathbb{R}}

\newcommand{\vtx}[1]{*+[o][F-]{\scriptscriptstyle #1}}

\newcommand{\Tr}{\textrm{Tr}}

\renewcommand{\>}{\rangle}

\newcommand{\Mod}{\ensuremath{\mathsf{Mod}}}

\newcommand{\simples}{\ensuremath{\mathsf{simples}}}
\newcommand{\End}{\ensuremath{\mathsf{End}}}

\newcommand{\Ext}{\mathsf{Ext}}
\newcommand{\Eqv}{\mathsf{Eqv}}

\font\ef = eufb10
\newcommand{\ideal}[1]{\ensuremath{\text{\ef{#1}}}}

\newcommand{\Span}{\mathrm{Span}}

\newtheorem{lemma}{Lemma}[section]

\newtheorem{theorem}[lemma]{Theorem}

\newtheorem{observation}[lemma]{Observation}

\theoremstyle{definition}

\newtheorem{example}[lemma]{Example}
\newtheorem{definition}[lemma]{Definition}

{

}

\theoremstyle{remark}

\newtheorem{remark}[lemma]{Remark}

\newcommand{\rep}{\ensuremath{\mathsf{rep}}}

\newcommand{\Mat}{\mathsf{Mat}}

\newcommand{\Coh}{\mathsf{Coh}}
\newcommand{\Hom}{\textrm{Hom}}

\newcommand{\GL}{\ensuremath{\mathsf{GL}}}

\newcommand{\qpol}{\cQ}
\newcommand{\carr}{\mathtt R}

\title{Consistency conditions for dimer models}

\author{Raf Bocklandt}
\address{Raf Bocklandt\\
School of Mathematics and Statistics\\
Herschel Building\\
Newcastle University\\
Newcastle upon Tyne\\
NE1 7RU\\
UK}
\email{raf.bocklandt@gmail.com}

\xyoption{all}

\begin{document}
\begin{abstract}
Dimer models are a combinatorial tool to describe certain algebras that
appear as noncommutative crepant resolutions of toric Gorenstein singularities.
Unfortunately, not every dimer model gives rise to a noncommutative crepant resolution. Several notions
of consistency have been introduced to deal with this problem. In this paper we study the major different notions in 
detail and show that for dimer models on a torus they are all equivalent.
\end{abstract}
\maketitle

\section{Introduction}\label{intro}
If $X$ is a 3-dimensional normal Gorenstein singularity admitting a crepant resolution $\tilde X\to X$, then one is interested
to describe the bounded derived category $\cD \Coh \tilde X$ of coherent sheaves on $\tilde X$. A well known result by Bridgeland \cite{bridgeland} shows that this category
only depends on the singularity and not on the particular choice of crepant resolution. 

In many cases there exists a tilting bundle in $\cU \in \cD \Coh \tilde X$ such that $\cD \Coh \tilde X$ is equivalent as a triangulated category
to the derived category of finitely generated $A$-modules $\cD \Mod A$ where $A=\End \cX$. 
To model these algebras without referring to a commutative crepant resolution, Van den Bergh \cite{nccrep} introduced the notion
of a a noncommutative crepant resolution (NCCR) of $X$. This is a homologically homogeneous algebra 
of the form $A=\End_R(T)$ where $T$ is a a reflexive $R$-module, with $R=\C[X]$ the coordinate ring of the singularity.
An NCCR is however far from unique and in general there are an infinite
number of different noncommutative crepant resolutions.

If we make two restrictions, the problem becomes more manageable. First we assume that $X$ is a \emph{toric} 3-dimensional Gorenstein singularity. This automaticly
implies the existence of a commutative crepant resolution. Secondly, we assume that the tilting bundle is a direct sum of non-isomorphic line bundles.
It was noticed in string theory \cite{HanKen,FranHan,HanHer} that under these conditions the algebra $A$ can be described using a dimer model on a torus.

This means that $A$ is the path algebra of a quiver $Q$ with relations where $Q$ is embedded in a 2 dimensional real torus $T$ such that every
connected piece of $T\setminus Q$ is bounded by a cyclic path of length at least 3. The relations are given by demanding
for every arrow $a$ that $p=q$ where $ap$ and $aq$ are the bounding cycles that contain $a$.

This nice description follows from the fact that the algebra $A$ is a toric order, 
a special type of order compatible with the toric structure, and 
Calabi-Yau-3 (CY-3), i.e. it admits a selfdual bimodule resolution of length $3$.
In \cite{bocklandtqp} it was shown that every toric CY-3 order comes from a dimer model.

Not every dimer model gives a noncommutative crepant resolution of its center. To do so,
it needs to satisfy some extra conditions called consistency conditions. In recent years, several quite different
consistency conditions were proposed such as cancellation \cite{davison}, non-intersecting zig and zag rays \cite{kenyon}, consistent R-charges \cite{Kennaway} and algebraic consistency \cite{broomhead}.

The aim of this paper is to show that for dimer models on a torus all these consistency conditions are equivalent.
Moreover the condition of being an order and the condition of being an NCCR are also equivalent to these consistency conditions.
The situation for the Calabi-Yau condition is less clear: it was shown by Davison \cite{davison} and Broomhead \cite{broomhead} 
that cancellation and algebraic consistency imply the CY-3 condition, but there is no proof for the other direction.
We will give an example of an infinite dimer model that is not cancellation but satisfies a suitable generalization
of the CY-3 property to the infinite case. However, no finite counterexamples are known.

If one broadens the definition of a dimer model to allow other compact surfaces,
the consistency conditions are no longer equivalent. We will also discuss the differences for those cases.

The paper is organized as follows. We start with a preliminary section on quivers and dimer models.
Then we introduce the cancellation property and discuss its relation with the CY-3 property.
In the other sections we introduce each time a notion of consistency and show that is equivalent to the cancellation property for dimer models
on a torus.
We end with a summary section which gives an overview of which equivalences hold for each type of surface.

\section{Acknowledgements}

I want to thank Nathan Broomhead, Alastair
King, Michael Wemyss, Travis Schedler and Sergei Mozgovoy for the interesting discussions
about these subjects at the Liegrits workshop in Oxford in January 2008 and during
my stays in Bath, Chicago and Wuppertal. I also want to thank Lieven Le Bruyn for
sharing his insights in the theory of reflexive Azumaya algebras.

\section{Preliminaries}

\subsection{Quivers}

A \emph{quiver} $Q$ is an oriented graph. We denote the set of vertices by $Q_0$, the set of arrows by $Q_1$ and the maps $h,t$ assign to each arrow its head and tail.
A \emph{nontrivial path} $p$ is a sequence of arrows $a_1\cdots a_k$ such that $t(a_i)=h(a_{i+1})$, whereas a \emph{trivial path} is just a vertex. We will denote the length of a path by $|p|:= k$ and the head and tail by $h(p)=h(a_1),~ t(p)=t(a_k)$. A path is called cyclic if $h(p)=t(p)$. 
Later on we will denote by $p[i]$ the $n-i^{th}$ arrow of $p$ and by $p[i\dots j]$ the subpath $p[i]\dots p[j]$.
\[
 \xymatrix{\vtx{}&\vtx{}\ar[l]_{p[n-1]}&\vtx{}\ar[l]_{p[n-2]}&\vtx{}\ar@{.>}[l]_{p[1]}&\vtx{}\ar[l]_{p[0]}}\text{ and }p = p[n-1]p[n-2]\dots p[1]p[0].
\]
A quiver is called \emph{connected} if it is not the disjoint union of two subquivers and it is \emph{strongly connected} if there is a cyclic path through each pair of vertices.

The \emph{path algebra} $\C Q$ is the complex vector space with as basis the paths in $Q$ and the multiplication of two paths $p$, $q$ is their concatenation $pq$ if $t(p)=h(q)$ or else $0$.
The span of all paths of nonzero length form an ideal which we denote by $\cJ$.
A \emph{path algebra with relations} $A=\C Q/\cI$ is the quotient of a path algebra by a finitely generated ideal $\cI \subset \cJ^2$. A path algebra is connected or strongly connected if and only if its underlying quiver is.
We will call a path algebra with relations $\C Q/\cI$ \emph{positively graded} if there exists a grading $\carr: Q_1 \to \Rl_{>0}$ such that $\cI$ is generated by homogeneous relations. 

\subsection{Dimer models}
A \emph{dimer model} $\qpol$ is a strongly connected quiver $Q$ enriched with a 2 disjoint sets of cycles of length at least $3$: $Q^+_2$ and $Q_2^-$, such that
\begin{itemize}
\item[DO] {\bf Orientability condition.} Every arrow is contained exactly once in one cycle in $Q_2^+$ and once in one in $Q_2^-$. 
\item[DM] {\bf Manifold condition.} The incidence graph of the cycles and arrows meeting a given vertex is connected.
\end{itemize}
The Euler characteristic of a dimer model is
\[
\chi_\qpol := \# Q_0-\# Q_1+ \# Q_2 \text{ with $Q_2=Q_2^+\cup Q_2^-$.} 
\]
From a dimer model we can build a topological space $|\qpol|$ by associating to every cycle of length $k$ in $Q_2$ a $k$-gon. 
We label the edges of this $k$-gon cyclicly by the arrows of the quiver and identify edges of different polygons labeled with the same arrow. 
If $\qpol$ satisfies DO and DM then $|\qpol|$ is a compact orientable surface with Euler characteristic $\chi_\qpol$ (\cite{bocklandtqp}).
We say that $\qpol$ is a dimer model on $|\qpol|$ and if $\chi_\qpol=0$ we speak of a dimer model on a torus.

To every dimer model we can associate its Jacobi algebra. For every $a \in Q_1$ we set $r_a = p^+-p^-$ where
$p^\pm a \in Q_2^\pm$  and then set 
\[
A_{\qpol}:= \C Q/ \<r_a | a\in Q_1 \>
\]
This algebra can be expressed in terms of a superpotential but we will not pursue this direction.

A dimer model is positively graded if there is a degree map $\carr:Q_1\to \R_{>0}$ such that all cycles
in $Q_2$ have the same degree. This turns $A_\qpol$ in a positively graded algebra.

Given a dimer model $\qpol$ on $X=|\qpol|$, we can look at the universal cover $\tilde X\to X$. We can lift the embedding of $\qpol$ in $X$
to obtain a possible infinite dimer model $\tilde \qpol$, which we will call the universal cover of $\qpol$.  

\begin{remark}
Usually the definition of a dimer model starts from the dual of $\qpol$: a bipartite graph on a surface, with nodes $Q_2$ and
edges $Q_1$, only after that the quiver is constructed by taking the dual. We don't do this because the switching can sometimes
cause confusion. We do keep the name dimer model, as it is most commonly used in the literature.
\end{remark}

\begin{example}
Below we give 4 examples of dimer models. the first is a dimer model on a sphere, the second and the third on a torus, the last on a double torus.
Arrows and vertices with the same label are identified.
\[
\xymatrix@C=.4cm@R=.75cm{
\vtx{1}\ar[dd]\ar[rrr]&&&\vtx{2}\ar[dll]\ar@{.>}[dl]\\
&\vtx{5}\ar[ul]\ar[drr]&\vtx{6}\ar@{.>}[ull]\ar@{.>}[dr]&\\
\vtx{4}\ar[ur]\ar@{.>}[urr]&&&\vtx{3}\ar[uu]\ar[lll]
}
\hspace{.5cm}
\xymatrix@C=.4cm@R=.75cm{
\vtx{1}\ar[rrr]_a\ar[dr]&&&\vtx{1}\ar[ld]\\
&\vtx{3}\ar[r]\ar[ld]&\vtx{2}\ar[ull]\ar[dr]&\\
\vtx{1}\ar[rrr]^a\ar[uu]_b&&&\vtx{1}\ar[ull]\ar[uu]^b
}
\hspace{.5cm}
\xymatrix@C=.75cm@R=.75cm{
\vtx{1}\ar[r]_{a}&\vtx{2}\ar[d]&\vtx{1}\ar[l]^b\\
\vtx{3}\ar[u]_c\ar[d]^d&\vtx{4}\ar[l]\ar[r]&\vtx{3}\ar[u]^c\ar[d]_d\\
\vtx{1}\ar[r]^{a}&\vtx{2}\ar[u]&\vtx{1}\ar[l]_b
}
\hspace{.5cm}
\xymatrix@C=.75cm@R=.75cm{
\vtx{1}\ar[r]_{a}\ar[d]^{b}&\vtx{1}\ar[r]_b&\vtx{1}\ar[d]_c\\
\vtx{1}\ar[d]^a&&\vtx{1}\ar[d]_d\\
\vtx{1}\ar[r]^{d}&\vtx{1}\ar[r]^c&\vtx{1}\ar[uull]
}
\]
In the first example the cycles of $Q_2^+$ ($Q_2^-$) are the (anti-)clockwise triangles of the octahedron, in the last 3 examples the cycles $Q_2^+$ ($Q_2^-$) are (anti-)clockwise
triangles, quadrangles and pentagons.

Using the results of this paper, one can check that the first example is an order but not CY-3, 
the second nor an order nor CY-3, the third both an order and CY-3, the last CY-3 but not an order. 
The third is the only NCCR and its center is the coordinate ring over the cone over $\PP_1\times \PP_1$.
\end{example}

\section{Cancellation}

A path algebra of a quiver with relations is called a \emph{cancellation algebra} if for every arrow $a$ and any two paths $p,q$ with
$h(a)=t(p)=t(q)$ we have $pa=qa \implies p=q$ and for any two paths $p,q$ with
$t(a)=h(p)=h(q)$ we have $ap=aq \implies p=q$. 

For Jacobi algebras from dimer models we can restate the cancellation property.
The relations in the Jacobi algebra $A_{\qpol}$ imply that all cycles in $Q_2$ are equivalent: ${c_1}p=p{c_2}$ for every $p$ with $h(p)=t(c_1)$ and $t(p)=h(c_2)$. This implies that the algebra $A$ has a central element: $\sum c$ where we sum over a subset representatives of $Q_2$ that 
contains just one cyclic path $c$  with $h(c)=i$ for every $i\in Q_0$. 
We will denote this central element by $\ell$. 
For every arrow $a$ we can find a path $p$ such that $ap\in Q_2^+$ and hence $ap=h(a)\ell$ and $pa=t(a)\ell$.

The cancellation property states that the map
\[
 A_{\qpol} \to A_{\qpol} \otimes_{\C[\ell]} \C[\ell,\ell^{-1}]
\]
is an embedding. This tensor product is the algebra obtained by making every arrow invertible (i.e. for every $a$ we have an $a^{-1}$ such that $aa^{-1}=h(a)$ and
$a^{-1} a=t(a)$). This algebra is the localization of $A_{\qpol}$ by the Ore set $\{\ell^k|k \in \N\}$ and we denote it by $\hat A_{\qpol}$. 

We will take this property as the starting point from which we are going to prove the equivalences of the different consistency conditions.
But first we need to discuss the relation between cancellation and the Calabi-Yau property.
\section{The Calabi-Yau-3 condition}

\begin{definition}
A path algebra with relations $A$ is \emph{Calabi-Yau-3} 
if $A$ has a projective bimodule resolution $\cP^\bullet$ that is dual to its third shift
\[
 \Hom_{A-A}(\cP^\bullet,A\otimes A)[3] \cong \cP^\bullet
\]
\end{definition}
From this definition it is clear that a CY-3 algebra has global dimension $3$ and there are isomorphism
between $\Ext^i(X,Y)$ and $\Ext^{3-i}(X,Y)^*$ for every pair of finite dimensional left $A$-modules $X,Y$.
For more information about the CY-3 property we refer to \cite{GB} and \cite{bocklandt}.

That cancellation implies CY-3 was proved by Ben Davison in \cite{davison}. 
\begin{theorem}[Davison]\label{canc}
The Jacobi algebra of a dimer model $\qpol$ with $\chi_\qpol\le 0$ is CY-3 if it is a cancellation algebra.
\end{theorem}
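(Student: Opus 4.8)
The plan is to write down the standard self-dual bimodule complex attached to a superpotential algebra and then prove it is a resolution, the exactness being the real content. Set $A := A_\qpol$ and let $R = \C Q_0$ be the semisimple subalgebra on the vertices, so that $A$ is generated over $R$ by the arrows and carries exactly one relation $r_a = p^+ - p^-$ for each arrow $a$. Since the number of relations equals $\# Q_1$, the candidate projective bimodule complex $\cP^\bullet$ has the shape
\[
0 \to A\otimes_R A \xrightarrow{d_3} A\otimes_R \C Q_1\otimes_R A \xrightarrow{d_2} A\otimes_R \C Q_1\otimes_R A \xrightarrow{d_1} A\otimes_R A \xrightarrow{m} A \to 0,
\]
where $m$ is multiplication, $d_1(1\otimes a\otimes 1)=a\otimes 1-1\otimes a$, the map $d_2$ encodes the relations $r_a$ through the cyclic derivatives of the faces $Q_2^+\cup Q_2^-$, and $d_3$ closes the complex using the cyclic symmetry of those faces. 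The four free bimodules are indexed by $Q_0$, $Q_1$, $Q_1$ and $Q_0$, and this symmetry $Q_0\leftrightarrow Q_0$, $Q_1\leftrightarrow Q_1$ is what the self-duality below will exploit.

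First I would establish that $\cP^\bullet$ (the four projective terms, resolving $A$) is self-dual: applying $\Hom_{A-A}(-,A\otimes A)$ and shifting by $3$ returns $\cP^\bullet$. This is a formal check. The orientability condition DO gives a perfect pairing identifying $\C Q_1$ with its dual, $d_2$ is self-adjoint under this pairing, and the sign difference between $Q_2^+$ and $Q_2^-$ interchanges $d_1$ with $d_3$. Neither $\chi_\qpol\le 0$ nor cancellation enters here; this step only records the Poincar\'e-type symmetry carried by the oriented surface $|\qpol|$, and it is what will upgrade any proof of exactness into the CY-$3$ isomorphism.

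The substance is exactness of $\cP^\bullet$. Surjectivity of $m$ and exactness at $A\otimes_R A$ are standard, since $A$ is generated over $R$ by arrows. For the higher homology I would localize at the central element $\ell$. The cancellation hypothesis is exactly the statement that $A\hookrightarrow \hat A_\qpol = A\otimes_{\C[\ell]}\C[\ell,\ell^{-1}]$ is injective, so $\ell$ is a non-zero-divisor and every free term $A\otimes_R A$ of $\cP^\bullet$ is $\ell$-torsion-free. After inverting $\ell$ every arrow becomes invertible, and $\hat A_\qpol$ admits an explicit description in terms of the universal cover $\tilde\qpol$; under it the localized complex $\widehat{\cP}^\bullet := \cP^\bullet\otimes_{\C[\ell]}\C[\ell,\ell^{-1}]$ is identified with a cellular chain complex computing the homology of $\tilde X = |\tilde\qpol|$. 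Here the hypothesis $\chi_\qpol\le 0$ is decisive: it forces $\tilde X$ to be $\R^2$ or the hyperbolic plane, hence contractible, so $\widehat{\cP}^\bullet$ is exact.

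The main obstacle is to descend this generic exactness from $\hat A_\qpol$ back to $A$. Since localization is flat, exactness of $\widehat{\cP}^\bullet$ tells us only that each homology module $H_i(\cP^\bullet)$ is killed by a power of $\ell$; to conclude $H_i(\cP^\bullet)=0$ I must show these modules are $\ell$-torsion-free, equivalently that $\image d_{i+1}$ is $\ell$-saturated inside $\Ker d_i$. This is the delicate point, and it is where cancellation does the real work: given $x\in\Ker d_i$ with $\ell^k x = d_{i+1}(y)$, one wants the cancellation property to force $y$ to be divisible by $\ell$, whence $\ell^{k-1}x\in\image d_{i+1}$, and induction on $k$ gives $x\in\image d_{i+1}$. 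Concretely this saturation reduces to controlling the reduced complex $\cP^\bullet/\ell\,\cP^\bullet$, which must be handled through the combinatorics of the dimer model; the positive grading of $A_\qpol$ keeps every graded piece finite-dimensional, so the argument can be run degree by degree. Once exactness is in hand, the self-duality established above promotes $\cP^\bullet$ to a self-dual projective bimodule resolution of length $3$, which is precisely the CY-$3$ condition.
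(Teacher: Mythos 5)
Your complex is the right one (it is the complex $\cC^\bullet$ that the paper attaches to $\qpol$), and the self-duality step is indeed a formal consequence of the superpotential structure. The gap is in the exactness argument, and it sits exactly where you say the ``real work'' happens. Flatness of the localization at $\ell$ only tells you that each $H_i(\cP^\bullet)$ is $\ell$-power torsion; you must then prove that these homology bimodules are $\ell$-torsion-free, and you give no argument for this. The reduction you sketch is not even the correct statement: from $\ell^k x=d_{i+1}(y)$ with $x\in\Ker d_i$ there is no reason for $y$ itself to be divisible by $\ell$ --- what you actually need is $d_{i+1}(y)\in\ell\,\image d_{i+1}$, which is just the saturation claim restated. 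Since this saturation \emph{is} the theorem (everything else in your write-up holds for any dimer model with $\chi_\qpol\le 0$, with cancellation used only for injectivity of the localization), the proof is not complete. A second, smaller problem: the claim that the localized complex ``is identified with a cellular chain complex computing the homology of $\tilde X$'' is asserted rather than proved, and as stated it cannot be literally true --- the terms are indexed by $Q_0,Q_1,Q_1,Q_0$ (plus the augmentation), not by the cells of a surface, and the homogeneous components of the localized \emph{bimodule} complex for the groupoid grading are infinite dimensional.

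For comparison: the paper does not reprove the theorem (it cites Davison), but it records the method, which avoids your descent problem altogether. One tensors $\cC^\bullet$ with the vertex simples $S_v$, decomposes the resulting one-sided complex into its homogeneous components for the grading by the groupoid $G$ of paths in $\hat A_\qpol$, and shows that each component is a \emph{finite-dimensional} complex isomorphic to the cellular chain complex of a region of the universal cover $|\tilde\qpol|$ cut out by the combinatorics of a fixed path; cancellation is what makes these regions well defined and contractible, and $\chi_\qpol\le 0$ is what makes $|\tilde\qpol|$ a plane so that ``region'' makes sense. Exactness of the bimodule complex is then recovered from exactness after tensoring with all simples. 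If you want to salvage your localization route you would still have to carry out essentially this degree-by-degree analysis to prove the torsion-freeness, so nothing is gained. (Also note that positive gradability of $A_\qpol$ is a hypothesis in this paper, not an automatic feature, so ``run the argument degree by degree'' using $\carr$ would itself need justification.)
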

\begin{remark}
Davison's work was a generalization of work by Mozgovoy and Reineke \cite{MR} which used an extra consistency condition. This extra condition turned out to be a consequence of the cancellation property.
\end{remark}

The method in the proof involved showing that a certain complex $\cC^\bullet$, which is by construction dual to its third shift, is acyclic.
This complex looks like
\[
\xymatrix{
\underset{s \in \cS}\bigoplus F_{s}\ar[r]^{\delta_3}&
\underset{r \in \cR}\bigoplus F_{r}\ar[r]^{\delta_2}&
\underset{b \in Q_1}\bigoplus F_{b}\ar[r]^{\delta_1}&
\underset{i \in Q_0}\bigoplus F_{i}
}
\]
where $\cR=\{r_a| a\in Q_1\}$ is the set of relations and $\cS=\{v\ell|v \in Q_0\}$. The bimodule $F_p$ is defined as
$Ah(p)\otimes p \otimes t(p)A$. The differentials have the following form:
\se{
\delta_1(q_1 \otimes b \otimes q_2) &= q_1b\otimes t(b) \otimes q_2 - q_1\otimes h(b)  \otimes bq_2\\ 
\delta_2(q_1 \otimes r \otimes q_2) &= \sum_k q_1a_1\dots \otimes a_{k} \otimes \dots a_n q_2 - 
\sum_k q_1b_1\dots \otimes b_{k} \otimes \dots b_m q_2\\
\delta_3(q_1 \otimes s \otimes q_2) &= \sum_{h(a)=h(s)} q_1a\otimes r_a \otimes 1 - \sum_{t(a)=h(s)} 1\otimes r_a \otimes a q_2.
}
In the second line we assume  $r=a_1\dots a_n - b_1\dots b_m$.

Let $G$ be the groupoid of paths in $\hat A_{\qpol}$, this groupoid gives a $G$-grading on every $F_p$ and 
this grading makes the complex homogeneous.

This complex can even be defined for infinite dimer models, but in that case the complex is
not dual to its third shift because taking the dual is not well-behaved for infinite direct sums.
We will still continue to call $A_\qpol$ CY-3 in the infinite case if $\cC$ is exact.

With this in mind we have the following observation:
\begin{observation}
There are infinite dimer models that are CY-3 but not cancellation.
\end{observation}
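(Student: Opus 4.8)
The statement is an existence claim, so the plan is to exhibit a single explicit infinite dimer model and then verify the two properties separately. Since Theorem~\ref{canc} shows that cancellation forces CY-3 and no finite counterexample is known, the example must be genuinely infinite; the natural place to look is an inconsistent finite dimer model on the torus (one that is neither an order nor cancellation) and to pass to a suitable infinite cover or infinite variant $\qpol$ of it, arranged so that the failure of exactness of $\cC$ which is visible on the torus is pushed off to infinity while the failure of cancellation survives. First I would fix such a $\qpol$, write down its quiver $Q$ together with the two families $Q_2^\pm$, and check directly that DO and DM hold, so that $|\qpol|$ is a genuine (infinite) surface on which $A_\qpol$ and the complex $\cC$ are defined.

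To establish that $\qpol$ is not cancellation I would produce two paths $p,q$ with $h(p)=h(q)$ and $t(p)=t(q)$, together with an arrow $a$, such that $pa=qa$ in $A_\qpol$ while $p\ne q$ in $A_\qpol$. Because $a$ becomes invertible in $\hat A_\qpol$, the identity $pa=qa$ forces $p=q$ in $\hat A_\qpol$, so $p-q$ is a nonzero element of the kernel of the localization map $A_\qpol\to\hat A_\qpol$. The only delicate point here is certifying $p\ne q$ inside $A_\qpol$ itself, i.e.\ that $p-q$ does not lie in the ideal $\langle r_a\mid a\in Q_1\rangle$; I would do this either through the positive grading $\carr$ or by constructing a finite-dimensional representation of $A_\qpol$ on which $p$ and $q$ act by different matrices.

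For the CY-3 property I would show that the self-dual complex $\cC$ is exact. The tool is the $G$-grading coming from the groupoid of paths in $\hat A_\qpol$: since the differentials $\delta_1,\delta_2,\delta_3$ are all homogeneous, $\cC$ splits as a direct sum over $g\in G$ of complexes of finite-dimensional vector spaces, and exactness can be checked one $g$ at a time. In each graded piece I would compute the Euler characteristic combinatorially from the dimer data and then pin down the ranks of the three maps, so that the alternating sum of dimensions together with the rank count give exactness in every internal degree; a translation symmetry of $\qpol$ should reduce the infinitely many classes $g$ to finitely many cases.

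The main obstacle is precisely the interface between the two halves of the argument. Davison's proof of Theorem~\ref{canc} uses the injectivity of $A_\qpol\to\hat A_\qpol$ exactly at the step that controls the homology of $\cC$, and this injectivity is what fails here; so I cannot quote the theorem and must instead verify exactness at the offending spot by hand, confirming that the nonzero kernel element $p-q$ found above does not survive as homology of $\cC$. Making the combinatorics of the graded pieces transparent enough for this to be checked directly, and confirming that the chosen infinite model genuinely annihilates the torus-level obstruction rather than merely relocating it, is where the real work lies.
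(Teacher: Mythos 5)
Your outline matches the shape of the paper's argument (explicit infinite example; verify non-cancellation directly; verify exactness of $\cC$ by decomposing into $G$-homogeneous finite-dimensional pieces), but as written it is a plan rather than a proof, and for an existence statement the example \emph{is} the proof. You never actually exhibit the dimer model: you say you would ``fix such a $\qpol$'' obtained from an inconsistent torus model by passing to an infinite cover or variant, and you yourself concede at the end that confirming the chosen model ``genuinely annihilates the torus-level obstruction rather than merely relocating it\ldots is where the real work lies.'' That is precisely the content of the observation, and it is left undone. The paper's proof (itself only a sketch, with details deferred to a separate note) supplies a concrete model: the infinite square grid in the plane with every tile identical except one central square, which is subdivided; non-cancellation is visible locally at the defect, while the $G$-graded pieces of $\cC^\bullet\otimes_{A_\qpol}S_v$ are each finite-dimensional and can be checked to be acyclic.

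There is also a concrete flaw in the one place where you commit to a mechanism: you propose to use ``a translation symmetry of $\qpol$'' to reduce the infinitely many $G$-degrees to finitely many cases. But a fully periodic infinite model is the universal cover of a finite dimer model on the torus, and Theorem~\ref{cancelcover} then ties its cancellation property to that of the finite quotient, so a periodic example would essentially hand you a finite counterexample --- which the paper states is not known to exist. Any viable example must therefore break translational symmetry (the paper's does, at a single tile), and your proposed symmetry reduction is unavailable exactly where it is needed, namely for the $G$-degrees whose supporting paths pass near the defect. Separately, note that the paper first reduces exactness of the bimodule complex to exactness of $\cC^\bullet\otimes_{A_\qpol}S_v$ for the graded simples $S_v$ (using the positive grading with all arrows of degree $1$) before invoking the $G$-decomposition; if you work with the bimodule complex directly you should justify that each $G$-homogeneous component is finite-dimensional, which is not automatic when cancellation fails, since distinct paths with the same image in $\hat A_\qpol$ all land in the same component.
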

\begin{proof}(Sketch).
The example we give is
\[
\xymatrix@C=.2cm@R=.375cm{
&\vdots&&&\vdots&&&\vdots&&&\vdots&\\
\cdots&\vtx{}\ar[rrr]&&&\vtx{}\ar[rrr]\ar[ddlll]&&&\vtx{}\ar[rrr]\ar[ddlll]&&&\vtx{}\ar[ddlll]&\cdots\\
&&&&&&&&&&&\\
\cdots&\vtx{}\ar[rrr]\ar[uu]&&&\vtx{}\ar[rrr]\ar[ddlll]\ar[uu]\ar[dr]&&&\vtx{}\ar[ld]\ar[uu]\ar[rrr]&&&\vtx{}\ar[uu]\ar[ddlll]&\cdots\\
&&&&&\vtx{}\ar[r]\ar[ld]&\vtx{}\ar[ull]\ar[dr]&&&&&\\
\cdots&\vtx{}\ar[uu]\ar[rrr]&&&\vtx{}\ar[rrr]\ar[uu]\ar[ddlll]\ar[uu]&&&\vtx{}\ar[ddlll]\ar[uu]\ar[ull]\ar[uu]\ar[rrr]\ar[ddlll]\ar[uu]&&&\vtx{}\ar[uu]\ar[ddlll]&\cdots\\
&&&&&&&&&&&\\
\cdots&\vtx{}\ar[uu]\ar[rrr]&&&\vtx{}\ar[uu]\ar[rrr]&&&\vtx{}\ar[uu]\ar[rrr]&&&\vtx{}\ar[uu]&\cdots\\
&\vdots&&&\vdots&&&\vdots&&&\vdots&
}
\]
with all squares alike except the central one.
We can give all arrows degree $1$ to make $A_\qpol$ graded.

If $\cC^\bullet$ were not a bimodule resolution then either there would be extra syzygies. 
Because of the gradedness, there would be at least one simple $S_v:= A_{\qpol}v/(A_{\qpol}v)_{> 0}$ for which $\cC^\bullet\otimes_{A_{\qpol}} S_v$ is not a projective
resolution of $v$.

Now $\cC^\bullet\otimes_{A_{\qpol}} S_v$ can be decomposed in its $G$-homogeneous components and for each of the paths $p$ in $G$ one can 
check easily (see \cite{notcancelnote}) that the $p$-homogeneous part $\cC^\bullet\otimes_{A_{\qpol}} S_v$ is finite dimensional and acyclic.
\end{proof}
It is still an open question whether in the finite case there are CY-3 algebras that are not cancellation.

\section{Zigzag paths}

Checking whether $A_{\qpol}$ is a cancellation algebra is not an easy task. Here we will introduce a combinatorial criterion that will enable us to check this property visually.
In order to do this we need the following theorem from \cite{bocklandtqp} relating a quiver polyhedron and its universal cover.
\begin{theorem}\label{cancelcover}
The Jacobi algebra $A_{\qpol}$ is a cancellation algebra if and only if $A_{\tilde \qpol}$ is a cancellation algebra.
\end{theorem}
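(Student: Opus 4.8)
The plan is to translate the algebraic cancellation property into a purely combinatorial statement about paths and then transport it between $\qpol$ and $\tilde\qpol$ using the path-lifting property of the covering $\tilde X\to X$. First I would record that, because every generator $r_a=p^+-p^-$ of $\cI$ is a \emph{difference} of two paths, the ideal $\cI$ is spanned by the path-differences $x(p^+-p^-)y$. Hence $A_\qpol=\C Q/\cI$ is freely spanned by the equivalence classes of paths under the equivalence relation $\sim$ generated by the elementary \emph{face moves} $xp^+y\sim xp^-y$, where $p^+a\in Q_2^+$ and $p^-a\in Q_2^-$ are the two cycles through a common arrow $a$. In particular, two paths $p,q$ satisfy $p=q$ in $A_\qpol$ if and only if they share a head and a tail and are joined by a finite sequence of such moves; the same holds verbatim for $A_{\tilde\qpol}$. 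This reduces each of the two cancellation conditions to a statement about $\sim$.

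The technical heart is the following dictionary. Fix a vertex $j$ of $Q$ and a lift $\tilde j$ in $\tilde Q$; for a path $p$ with $t(p)=j$ let $\tilde p$ be its unique lift with $t(\tilde p)=\tilde j$. I claim that for paths $p,q$ with $t(p)=t(q)=j$ one has $p=q$ in $A_\qpol$ if and only if $h(\tilde p)=h(\tilde q)$ and $\tilde p=\tilde q$ in $A_{\tilde\qpol}$. For the forward direction, a face move downstairs replaces an occurrence of $p^+$ by $p^-$; since the orientability condition DO holds on $\tilde\qpol$ as well, the lifted arrow lies in exactly one cycle of $\tilde Q_2^+$ and one of $\tilde Q_2^-$, namely the lifts of $p^+a$ and $p^-a$, so the move lifts to the corresponding face move applied to $\tilde p$. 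Running the whole $\sim$-sequence from $p$ to $q$ produces a $\sim$-sequence from $\tilde p$ to a path projecting to $q$ and starting at $\tilde j$, which by uniqueness of lifts is $\tilde q$; in particular the heads agree. The reverse direction is the mirror image: the covering sends cycles in $\tilde Q_2^\pm$ to cycles in $Q_2^\pm$, so every face move upstairs projects to one downstairs.

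Granting the dictionary, both implications follow quickly, using that lifting is compatible with concatenation, i.e. $\widetilde{pa}=\tilde p\,\tilde a$ once the lift of the tail of $a$ is fixed. Suppose $A_{\tilde\qpol}$ is cancellation and $pa=qa$ in $A_\qpol$ with $t(p)=t(q)=h(a)$; lifting from a fixed lift of $t(a)$ gives $\tilde p\,\tilde a=\tilde q\,\tilde a$ in $A_{\tilde\qpol}$, whence $\tilde p=\tilde q$ and therefore $p=q$. The left-cancellation case is identical after exchanging heads and tails. For the converse one starts with $\tilde p\,\tilde a=\tilde q\,\tilde a$ in $A_{\tilde\qpol}$, projects to obtain $pa=qa$ in $A_\qpol$, cancels downstairs to get $p=q$, and reads the dictionary backwards to conclude $\tilde p=\tilde q$.

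The step I expect to be the main obstacle is the rigorous justification of the dictionary, namely that a single face move is supported in the star of one arrow and hence lifts and projects homeomorphically through the covering. This is exactly where the dimer-model axioms enter: DO guarantees that each arrow, upstairs and downstairs, sits in a unique positive and a unique negative cycle, so face moves correspond bijectively; and DM ensures that the lifted CW structure on $\tilde X$ is again a dimer model, so the moves upstairs are of the same type. A secondary point worth noting is that $\tilde\qpol$ is typically infinite, so $A_{\tilde\qpol}$ is non-unital; but the entire argument is combinatorial and never uses finiteness, so this causes no difficulty.
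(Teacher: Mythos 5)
The paper does not actually prove this statement here --- it is imported from \cite{bocklandtqp} --- so there is no in-text proof to compare against, but your argument is correct and is essentially the standard covering-space proof of this fact. The two key points, both of which you handle properly, are (i) that equality of paths in a Jacobi algebra is exactly the equivalence relation generated by face moves $xp^+y\sim xp^-y$ (since the ideal is spanned by differences of paths), and (ii) that because the cycles of $Q_2^\pm$ bound faces and hence lift to cycles of $\tilde Q_2^\pm$, unique path lifting transports face moves bijectively in both directions along $\tilde X\to X$.
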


We will now restrict ourselves to the case when the Euler characteristic is nonpositive, so 
from now on we can assume that $|\tilde \qpol|$ is a contractible manifold. 

We can split any given path $p$ in the universal cover into \emph{positive (negative) arcs}. These are maximal subpaths that are 
contained in a positive (negative) cycle. We will a call path \emph{positively (negatively) irreducible} if none of its positive (negative) arcs is the derivative of a positive (negative) cycle or a cycle.

A \emph{zigzag path} is an infinite length path $\pZ = \dots \pZ[2]\pZ[1]\pZ[0]\pZ[-1]\pZ[-2]\dots$ for which all positive
and negative arcs have length $2$. It is easy to see that there are exactly two zigzag paths
for which $\pZ[0]$ equals a given arrow $a$ ($\pZ[1]\pZ[0]$ is either a positive or a negative arc).
We denote these two zigzag paths by $\pZ^+_a$ and $\pZ^-_a$. The part of the zigzag path $\pZ^+_a$ 
starting from $a$, $\pZ^+_a[i]_{i \ge 0}$ is called the zig ray from $a$ and is denoted by $\vec \pZ^+_a$.
Similarly we denote the zag ray by $\vec \pZ^-_a$.
The notion of a zigzag path is based on work by Kenyon in \cite{kenyon} and Kenyon and Schlenker in \cite{rhombic}. 

Every zigzag path $\pZ$ is bounded by a positively and a negatively irreducible path consisting of the positive (negative) arcs $u_i$ such that $u_i \pZ[2i+1]\pZ[2i]$ is a positive (negative) cycle.

\begin{theorem}\label{zigzag}
If $\chi_\qpol\le 0$ then $\qpol$ is cancellation if and only if
for every arrow $a \in \tilde Q_1$ the following condition hold
\begin{itemize}
 \item[Z] $\vec \pZ^+_a$ and $\vec \pZ^-_a$ only intersect in $a$ i.e. 
\[\forall i,j> 0: \pZ^+_a[i] \ne \pZ^-_a[j] \]
(note that the zigzag paths can intersect but only in different directions (f.i. $\pZ^+_a[i] = \pZ^-_a[j]$ with $i>0$ and $j<0$)
\end{itemize}
\end{theorem}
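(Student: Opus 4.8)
The plan is to prove both implications by contraposition, working entirely in the universal cover. By Theorem~\ref{cancelcover} the property of being a cancellation algebra passes between $\qpol$ and $\tilde\qpol$, so I may replace $\qpol$ by $\tilde\qpol$; since $\chi_\qpol\le 0$ the surface $|\tilde\qpol|$ is contractible, and every closed path in it bounds a disc. I first record the reformulation of cancellation that will be used: because $\hat A_{\tilde\qpol}$ is the Ore localization of $A_{\tilde\qpol}$ at the powers of the central element $\ell$, two paths $p,q$ with the same head and the same tail have the same image in $\hat A_{\tilde\qpol}$ if and only if $\ell^{N}p=\ell^{N}q$ in $A_{\tilde\qpol}$ for some $N$. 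Cancellation therefore fails exactly when two distinct paths with common endpoints become equal after appending enough copies of $\ell$. Finally I note how the relations act: applying $r_a=p^+-p^-$ replaces an occurrence of the positive complement $p^+$ of an arrow by its negative complement $p^-$, which geometrically pushes a path across a single polygon of $Q_2$; crucially, such a move is available only when the path already runs along all but one edge of that polygon. Thus $\ell^{N}p=\ell^{N}q$ means the two elongated paths are connected by a finite sequence of these constrained polygon-flips, and the theorem reduces to comparing the discs that can be swept by such flips with the reconvergence of zig and zag rays.

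For the implication that cancellation forces condition~Z, I assume Z fails: the zig ray $\vec\pZ^+_a$ and the zag ray $\vec\pZ^-_a$ meet at a common arrow $b=\pZ^+_a[i]=\pZ^-_a[j]$ with $i,j>0$. The two zigzag paths $\pZ^+_a$ and $\pZ^-_a$ then cross both at $a$ and at $b$, so their forward parts cut out a bigon-shaped disc $D$ in $|\tilde\qpol|$. Its two boundary sides are the pieces, running from $a$ to $b$, of the positively and negatively irreducible bounding paths attached to the two zigzags in the remark preceding the theorem. Because each side is irreducible it hugs the polygons of $D$ along its inner edges, so the tiling of $D$ can be removed one polygon at a time by the constrained flips above, each removed polygon contributing one factor $\ell$. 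Carrying this out identifies the two boundary sides in $\hat A_{\tilde\qpol}$ up to the accumulated power of $\ell$, while they remain distinct paths in $A_{\tilde\qpol}$ because $D$ is a genuine (nondegenerate) bigon. Hence $A_{\tilde\qpol}\to\hat A_{\tilde\qpol}$ is not injective and cancellation fails.

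Conversely, assuming cancellation fails I produce a violation of Z. By the reformulation there are distinct paths $p\ne q$ with the same endpoints and $\ell^{N}p=\ell^{N}q$; fix such a pair together with a flip sequence realizing the equality, chosen so that the total number of polygons flipped is minimal. The loop obtained by concatenating $p$ with the reverse of $q$ bounds a disc $D$, and the flip sequence exhibits $D$ as tiled by exactly the polygons that were crossed. Minimality forces the two boundary sides of $D$ to be positively and negatively irreducible: if one side had a polygon flippable on its outer edges it could be removed to give a smaller witness. An irreducible boundary side is, by the description preceding the theorem, a stretch of the bounding path of a zigzag, and reading the two sides of $\partial D$ through the zigzag structure produces a corner arrow $a$ whose zig ray runs along one side of $D$ and whose zag ray runs along the other, the two reconverging at the opposite corner $b$. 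This is precisely a failure of~Z.

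The step I expect to be the main obstacle is this last one: turning the minimal swept disc into an honest reconvergence of a zig ray and a zag ray. One must check that the two irreducible boundary arcs really are the forward halves of two zigzag paths sharing the corner $a$ (rather than unrelated zigzags), and that the meeting at the opposite corner is the genuinely obstructing kind $\pZ^+_a[i]=\pZ^-_a[j]$ with $i,j>0$ and not the harmless crossing in opposite directions allowed by the statement. This is a local analysis at each boundary vertex of $D$: the manifold condition~DM guarantees that the cycles and arrows around a vertex form a single disc, which is what lets one propagate the ``all arcs have length two'' property along the boundary and conclude that the two irreducible sides are zig and zag rays emanating from the same arrow. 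Making this propagation precise, and verifying that it cannot degenerate, is the technical core; the remaining verifications (that a single flip accounts for one factor $\ell$, and that a nondegenerate bigon gives distinct paths in $A_{\tilde\qpol}$) are routine.
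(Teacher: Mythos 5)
Your setup for the sufficiency direction (pass to the universal cover via Theorem~\ref{cancelcover}, reformulate cancellation as injectivity into $\hat A_{\tilde\qpol}$, and reduce a failure of cancellation to a minimal pair of paths bounding a disc whose two sides are respectively positively and negatively irreducible) matches the paper's proof up to that point. The gap is exactly the step you flag as the ``technical core'', and the fix you sketch would not work. A positively (or negatively) irreducible path is \emph{not} in general the bounding path of a single zigzag path: irreducibility only forbids any positive (negative) arc from being a cycle minus one arrow, whereas being the bounding path of a zigzag would force every complementary arc to have length exactly $2$. So there is no local propagation, via condition DM, of an ``all arcs have length two'' property along $\partial D$, and no corner arrow $a$ whose zig and zag rays run along the two sides of $D$; in general many distinct zigzag paths enter and leave each irreducible side. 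The paper instead launches a zigzag path $\pZ_a^+$ \emph{into} the region $S$ from an arrow $a$ chosen at $t(p)$ in the same negative cycle as the last positive arc of $p$, and runs an infinite descent: if $\pZ_a^+$ exited through the irreducible side $p$, successive zigzag paths $\pZ_c^-,\pZ_d^+,\dots$ would cut out strictly smaller regions $S^{(1)}\supset S^{(2)}\supset\cdots$, terminating in a self-intersecting zigzag path, contradicting Remark~\ref{Z1}; hence $\pZ_a^+$ and $\pZ_a^-$ both exit through $q$, and a second descent along $q$ gives the contradiction. None of this can be read off from the boundary of $D$ alone, so your final step is not a routine verification but the missing heart of the proof.

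The necessity direction also has a gap. After cutting out the bigon $D$ between the two rays, you assert that its two sides ``remain distinct paths in $A_{\tilde\qpol}$ because $D$ is a genuine (nondegenerate) bigon''. Distinctness as paths in the quiver does not imply distinctness in the quotient algebra, so this is unjustified; worse, it sits uneasily with your preceding claim that the polygons of $D$ can be removed one flip at a time, which, if true, would show the two sides \emph{are} equal in $A_{\tilde\qpol}$ up to a power of $\ell$ and hence exhibit no failure of cancellation. The paper avoids this by working with the accompanying irreducible paths $p_+$ and $p_-$ lying in the \emph{exterior} of the zigzag loop: cancellation would force $p_+=p_-\ell^k$ or $p_+\ell^k=p_-$, which is incompatible with one of them being positively and the other negatively irreducible. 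Finally, your reduction to a non-self-intersecting loop $pq^{-1}$ bounding a disc silently assumes away self-intersections and mutual intersections of $p$ and $q$; the paper handles this with the preliminary splitting $p=\ell^up_1\cdots p_k$, $q=\ell^vq_1\cdots q_l$ into pieces that coincide or do not intersect.
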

\begin{remark}\label{Z1}
Condition Z also implies that a zigzag path cannot selfintersect. Indeed if there are selfintersecting zigzag paths, we can take $\pZ$ such that $a=\pZ[0]=\pZ[i]$ and the loop $\pZ[i-1]\dots \pZ[0]$ encompasses the smallest number of cycles. If $\pZ=\pZ_a^\pm$ then the zigzag path $\pZ_a^\mp$ has an arrow inside the loop and as it cannot make a smaller
loop it must enter and leave the loop and hence there the zig and the zag ray of some arrow in $\pZ$ intersect. 
\end{remark}
\begin{proof}
Because  $\chi_\qpol\le 0$, the universal cover $|\tilde \qpol|$ cannot have the topology of a sphere. Therefore we have a 
definition of the interior and the exterior of a cyclic path that does not selfintersect.
{\bf Condition Z is necessary.}\\
If $\pZ^+_a[i] = \pZ^-_a[j]$ and $i,j$ are both positive and minimal, we look at the paths $p_+,p_-$ that
are the irreducible paths accompanying $\pZ^+_a[i]\dots\pZ^+_a[0]$ and $\pZ^-_a[j]\dots \pZ^-_a[0]$ which lie in the exterior of
$\pZ^+_a[i]\dots\pZ^+_a[0](\pZ^-_a[j]\dots \pZ^-_a[0])^{-1}$. By cancellation, we must have that $p_+=p_-\ell^k$ or  $p_+\ell^k=p_-$ for some $k\ge 0$.
This is impossible because $p_+$ or $p_-$ is positively or negatively irreducible.
\[
\xymatrix@=.4cm{
&&\vtx{}\ar[rd]\ar@/_/@{.>}[lldd]&&\vtx{}\ar[rdd]\ar@/_/@{.>}[ll]|{p_+}&\\
&&&\vtx{}\ar[ru]&&\\
\vtx{}\ar[r]^a&\vtx{}\ar[ruu]_{\pZ^+_a}\ar[rdd]^{\pZ^-_a}&&&&\vtx{}\ar[r]&\vtx{}\ar@/_/@{.>}[lluu]\ar@/^/@{.>}[lldd]\\
&&&\vtx{}\ar[rd]&&\\
&&\vtx{}\ar[ru]\ar@/^/@{.>}[lluu]&&\vtx{}\ar[ruu]\ar@/^/@{.>}[ll]|{p_-}&
}
\]
{\bf Condition Z is sufficient.}\\
If $\qpol$ is not cancellation, we will assume that condition Z holds and search for a contradiction.

Let $p,q$ be paths such that in $A_\qpol$ $p\ne q$ but $p\ell^k=q\ell^k$ for some $k>0$.

A pair of paths $(p,q)$ with $h(p)=h(q)$ and $t(p)=h(q)$  
such that $pq^{-1}$ is a loop that does not selfintersect and its
their interior cannot be shrunk by applying the relations $r_a$ to $p$ or $q$, is called an irreducible pair.
It is clear that if $(p,q)$ is an irreducible pair then either $p$ or $q$ is a cycle in $Q_2$ or
one of the paths is negatively irreducible and the other one positively irreducible.

If the last case does not happen, $A_\qpol$ must be cancellation. 
Indeed, given two paths $p,q$ with the same head and tail, we can turn every loop in $p$ or $q$ to a product of cycles in $Q_2$.
After puting these cycles in front,
we can split $p=\ell^u p_1\dots p_k$ and $q=\ell^v q_1\dots q_l$ such that $p_i$ and $q_i$ coincide or do not intersect. 
By assumption $(p_i,q_i)$ is not irreducible so we can shrink it until the two paths coincide up to a power of $\ell$.
Hence, we can transform $p$ into $q\ell^k$ or $q$ into $p\ell^k$ for some $k\in \N$.

So let $(p,q)$ such that $p$ is a negatively irreducible path and $q$ a positively reducible path.

At $t(p)$ we consider the arrow $a$ with $h(a)=t(p)$ such that $a$ sits in the same negative cycle as the last positive arc of $p$.  The zigzag path $\pZ_a^+$ must enter $S$ at some vertex on the boundary. 

This vertex $v$ cannot lie on $p$. Indeed, if this
were the case it would be the head of one of the negative arcs of $p$ and $\pZ_a^+[j_1]=b$ with $j_1<0$ odd, $p= r_2br_1$ and $h(b)=v$. Now the negative cycle containing $b$ contains at least two arrows not in $p$ because $p$ is negatively reduced.
One of these arrows is $c=\pZ_a^+[j_1+1]$. Through $c$ we can look at the zigzag path $\pZ_c^-$. This path enters the  simply connected piece $S^{(1)}$ bounded by $p^{(1)}=br_1$ and $\pZ_a^+[0]\dots \pZ_a^+[j_1+1]$. It cannot leave $S^{(1)}$ through $\pZ_a^+$ by Z, so it must leave $S^{(1)}$ at a vertex which is the tail of some negative arc of $p$. This cannot be the first arc or otherwise the two zigzag paths intersect at $a$. This means that for some $j_2>0$ $\pZ_c^-[j_2-1]\dots\pZ_c^-[0]$ and a piece $p^{(2)}$ of $p$ cut out an even smaller piece $S^{(2)}$.
\begin{center}
\begin{tikzpicture}
\draw [<-] (0,0) -- (.5,-.5) -- (1,0) -- (1.5,-.5) -- (2,0) -- (2.5,-.5) -- (3,0);
\draw [->] (2,0) -- (2.5,.5) -- (2,1) -- (2.5,1.5) -- (2,2);
\draw [<-] (2.5,.5) -- (2.8,.5);
\draw [->,very thick] (0,0) arc (180:90:.5);
\draw [->,very thick] (.5,.5) arc (180:90:.5);
\draw [->,very thick] (1,1) arc (180:0:.5);
\draw [->,very thick] (2,1) -- (2.5,1.5);
\draw [->,very thick] (2.5,1.5) arc (90:0:.5);
\draw [->,very thick] (3,1) arc (90:-90:.5);
\draw [->,very thick] (3,0) -- (2.5,-.5);
\draw (0,-.5) node{$a$};
\draw (1,-.5) node{$\pZ_a^+$};
\draw (2,.5) node{$\pZ_c^-$};
\draw (3,-.5) node{$b$};
\draw (2,-.5) node{$c$};
\draw (1,.5) node{$S^{(1)}$};
\draw (2.5,0) node{$S^{(2)}$};
\draw (2.5,1) node{$S^{(3)}$};
\draw (.5,1) node{$p^{(1)}$};
\draw (4,0.5) node{$p^{(2)}$};
\draw (3,0.5) node{$\pZ_d^+$};
\end{tikzpicture}
\end{center}
Through $d=\pZ_c^-[j_2-1]$ we can look at the zigzag path $\pZ_d^-$ which cuts out an even smaller piece $S^{(3)}$.
If we continue this procedure we get to a point where $p^{(k)}$ has length zero. But this implies that the corresponding zigzag path will selfintersect (contradicting remark \ref{Z1}).

So $\pZ_a^+$ will leave $S$ through $q$. Because Z does hold, $\pZ_a^-$ must also leave through $q$. Analogously to the previous paragraph we can now construct a sequence of zigzag paths cutting a shorter and shorter pieces of $q'$ until we
get a selfintersecting zigzag path (contradicting remark \ref{Z1}).
\end{proof}
\begin{remark}
Dimer models with positive Euler characteristic can never satisfy the zigzag condition because
the universal cover is the quiver itself. As this quiver is finite, the zig and zag path ray intersect multiple times.
\end{remark}
\begin{remark}
A similar discussion on the relation between cancellation and zigzag paths was done by Ishii and Ueda in \cite{ishiicons} 
\end{remark}

\section{Consistent $\carr$-charges}
We borrow the following definition from Kennaway \cite{Kennaway}:
\begin{definition}
A grading $\carr: Q_1 \to \R_{>0}$ is consistent if 
\begin{itemize}
\item[R1]
$\forall c \in Q_2: \sum_{a \in c}\carr_a =2$,
\item[R2] 
$\forall v \in Q_0: \sum_{h(a)=v}(1-\carr_a) + \sum_{t(a)=v}(1-\carr_a)=2$.
\end{itemize}
\end{definition}
\begin{remark}
In \cite{broomhead} a distinction is made between geometrically consistent and marginally consistent $\carr$-charges. 
The former have the extra condition that $\carr_a<1$ for every $a$, while for the latter one also allows $\carr_a \ge 1$. 
We will not make this distinction: for us marginally consistent $\carr$-charges are also consistent.
\end{remark}

It has been pointed out in e.g. \cite{Kennaway} that
consistency implies that the Euler characteristic is zero
\se{
2\chi_{\qpol} &= 2(\#Q_2 - \#Q_1 + \#Q_0)\\ 
&= \sum_{c \in Q_2}\sum_{a \in c} \carr_a - \sum_a 2 + \sum_v \left(\sum_{h(a)=v}(1-\carr_a) + \sum_{t(a)=v}(1-\carr_a)\right)\\
&=\sum_a\left( \underbrace{2\carr_a}_{a \text{ sits in 2 cycles}} - 2 + \underbrace{1-\carr_a}_{v=h(a)}+ \underbrace{1-\carr_a}_{v=t(a)}\right)=0
}

Given a consistent $\carr$-charge we can realize the universal cover of the dimer model, which is the Euclidean plane, in the following way:
turn every cycle in $Q_2$ into a polygon the vertices of which are all on the unit circle and every arrow $a$ stands on an arc of $\pi \carr_a$ radians.
Condition R1 implies that such a polygon exists as the arcs add up to $2\pi$.
If $a$ and $b$ are consecutive arrows in a cycle then one can check that the angle between the two
arrows is $\frac \pi 2(2 - \carr_a -\carr_b)$ because it is the inscribed angle standing on the arc spanned by the rest of the cycle.
Pasting all these polygons together one gets a tiling of the plane because condition R2 implies that the angles of the polygons at one vertex add up to $2 \pi$ (see also \cite{rhombic}).
Such an embedding is called an \emph{embedding with isoradial cycles}\footnote{Such an embedding is a bit different from an isoradial embedding of the dimer model, which embeds the dual graph isoradially i.e.
the centers of the faces sharing a common vertex lie on a unit circle. Our definition also includes cases where the cycle does not contain the center of the circle (when one of the arrows has $\carr$-charge $\ge 1$)}.

A second ingredient we need are perfect matchings.
\begin{definition}
A perfect matching is a subset of arrows $\cP \subset \qpol_1$ such
that every cycle in $\qpol_2$ has exactly one arrow from $\cP$. A perfect matching gives a nonnegative grading on $A_\qpol$ by assigning degree $1$ to each arrow in $\cP$ and zero to the others:
\[
\deg_\cP a = \begin{cases}
1&a \in\cP\\
0&a\not \in \cP
\end{cases}
\]
\end{definition}

For an embedding with isoradial cycles we can construct special perfect matchings:
\begin{lemma}[Definition of $\cP_\theta^\pm$]
Given a $\qpol$ embedded with isoradial cycles and a direction $\theta$, then the set $\cP_\theta^+$  ($\cP_\theta^-$) of all arrows $a$ such that the ray from the center of its positive cycle in direction $\theta$ in the isoradial embedding hits $a$ but not in its head (tail), is a perfect matching. 
\end{lemma}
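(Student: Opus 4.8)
The plan is to verify the two defining conditions of a perfect matching separately: that every cycle of $Q_2^+$ meets $\cP_\theta^+$ in exactly one arrow, and that every cycle of $Q_2^-$ does as well. The positive count is almost immediate from the construction; the negative count is the real obstacle, because membership of each arrow of a negative cycle is governed by a \emph{different} circle, namely that of its positive cycle, and it is not clear a priori that these separate conditions single out exactly one arrow of the negative cycle. The device that will resolve this is the rhombus built from the two circumcenters adjacent to an arrow in an isoradial embedding.

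For a positive cycle $c$ with circumcenter $O$, condition R1 gives $\sum_{a\in c}\carr_a=2$, so the arcs on which the arrows of $c$ stand, of angles $\pi\carr_a$, partition $O$'s unit circle. Since $O$ is the center, the ray from $O$ in direction $\theta$ meets this circle in exactly one point, which lies on exactly one arc, and the arrow carrying that arc is the unique candidate. I would make this precise by assigning to each $a\in c$ the half-open angular sector running from the direction $O\to t(a)$ to the direction $O\to h(a)$: these sectors tile the circle, and the clause ``but not in its head'' is exactly what makes them half-open, so that a direction pointing through a shared vertex of $c$ is counted once. Hence each positive cycle contributes exactly one arrow, for every $\theta$.

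Now fix a negative cycle $c^-$ with circumcenter $N$ and list its vertices cyclically as $v_0,\dots,v_{m-1}$, with arrow $a_i$ running from $t(a_i)=v_{i-1}$ to $h(a_i)=v_i$. Whether $a_i$ lies in $\cP_\theta^+$ is decided not at $N$ but at the center $P_i$ of the positive cycle through $a_i$, and these centers are distinct points. The geometric input that saves the day is isoradiality: $N$ and $P_i$ each lie at distance $1$ (the common circumradius) from both $v_{i-1}$ and $v_i$, so both lie on the perpendicular bisector of the segment $v_{i-1}v_i$, and the quadrilateral $v_{i-1}\,N\,v_i\,P_i$ is a rhombus. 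Its diagonals bisect each other, giving $P_i=v_{i-1}+v_i-N$, and reflection in the diagonal $v_{i-1}v_i$ shows that $a_i$ subtends equal central angles $\pi\carr_{a_i}$ at $N$ and at $P_i$.

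Substituting $P_i=v_{i-1}+v_i-N$ into the endpoint directions shows that the direction $P_i\to t(a_i)$ is $N-v_i$ and the direction $P_i\to h(a_i)$ is $N-v_{i-1}$. Hence the half-open sector of directions that selects $a_i$ at $P_i$ is precisely the sector at $N$ spanned by $v_{i-1}$ and $v_i$, rotated by $\pi$. As $i$ runs over the arrows of $c^-$, the unrotated sectors are, by R1 applied to $c^-$ and the equal-angle statement, exactly the arc decomposition of $N$'s circle; a common rotation by $\pi$ carries a partition of the circle to a partition, so the selecting sectors of the arrows of $c^-$ tile the full $2\pi$ without overlap. Every $\theta$ thus lies in exactly one of them, so $c^-$ meets $\cP_\theta^+$ in exactly one arrow, the half-open convention again matching ``not in its head''. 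Together with the positive count this shows $\cP_\theta^+$ is a perfect matching, and $\cP_\theta^-$ is handled symmetrically. A pleasant by-product is that nothing here uses the center lying inside its polygon, so the marginal case $\carr_a\ge 1$ flagged in the footnote is covered automatically: the argument only ever concerns directions and arcs, never whether the ray physically crosses a chord.
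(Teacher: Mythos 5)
Your proof is correct and takes essentially the same route as the paper: the positive count is immediate from R1, and the negative count is reduced to the same arc-tiling argument by observing that membership of $a$ in $\cP_\theta^+$ can equivalently be tested from the center of its negative cycle using the direction $-\theta$. Your explicit rhombus identity $P_i=v_{i-1}+v_i-N$ is exactly the content of the figure with which the paper justifies that equivalence, so you have simply written out in coordinates what the paper leaves to the picture.
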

\begin{proof}
It is clear from the construction that every positive cycle has exactly one arrow in $\cP_\theta^\pm$.
The same holds for the negative cycles because
$a \in \cP_\theta^\pm$ if and only if the ray from the center of its negative cycle in direction $-\theta$ in the isoradial embedding hits $a$ but not in its tail (head). 
\begin{center}
\begin{tikzpicture}
\draw (1,0) -- (0,1) -- (-1,0) -- (0,-1) -- (1,0);
\draw (0,1) arc (90:360:1);
\draw (1,0) arc (-90:180:1);
\draw (0,1) -- (.29, 1.71) -- (1.71,1.71) -- (1.71,.29) -- (1,0);
\draw [->,very thick] (0,0) -- (.5,.86) node [above] {$\theta$};
\draw [->,very thick] (1,1) -- (.5,.14) node [below] {$-\theta$};
\end{tikzpicture}
\end{center}
\end{proof}

Now let $\pZ=\pZ_a^+$ be a zigzag path in a dimer model embedded with isoradial cycles.
We define $\eps_{\pZ} \in \Rl/2\pi\Rl$ to be the angle of $h(a)$ as viewed from the center
of the positive cycle containing $\pZ_a^+[1]\pZ_a^+[0]$.
It is easy to check that this definition does not depend on the $a$.
\begin{center}
\begin{tikzpicture}
\draw [->,very thick] (1,1) -- (0,0);
\draw [->,very thick] (0,0) -- (-1,1);
\draw [->,very thick] (-1,1) -- (-.72,1.72) node [above] {${\pZ_a^-}$};
\draw [->,very thick] (-1,1) -- (-2,0);
\draw [->,very thick] (-2,0) -- (-2.86,.5);
\draw [->,very thick] (-2.86,.5) -- (-2.36,-1.36) node [below] {${\pZ_a^+}$};
\draw (-.5,.75) node{$a$};
\draw (0,0) arc (0:360:1);
\draw (-1.86,-.5) arc (0:360:1);
\draw (-1,1) arc (0:360:1);
\draw (1,1) arc (0:360:1);
\draw [->,thick, dashed] (-1,0) node [below] {$\eps_{\pZ}$}  -- (-1,1);
\draw [->,thick, dashed] (-2,1) node [above] {$-\eps_{\pZ}$} -- (-2,0);
\draw [->,thick, dashed] (0,1) node [above] {$-\eps_{\pZ}$} -- (0,0);
\draw [->,thick, dashed] (-2.86,-.5) node [below] {$\eps_{\pZ}$} -- (-2.86,.5);
\end{tikzpicture}
\end{center}

\begin{lemma}\label{PMZZ}Let $\theta=\eps_{\pZ_a^\pm}$.
\begin{enumerate}
\item $\cP_\theta^\pm$ intersects $\pZ_a^\pm$ in all the arrows $\pZ_a^\pm[i]$ with $i$ odd.
\item Both $a$ and $\pZ_a^\mp[1]$ are not in $\cP_{\theta}^\pm$.
\end{enumerate}
\end{lemma}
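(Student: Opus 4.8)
The plan is to deduce both parts from a single geometric input --- that the angle $\eps_\pZ$ is constant along the zigzag path --- combined with the half-open ``hits the arrow but not in its head/tail'' convention built into the definition of $\cP_\theta^\pm$. I will carry out the $+$ case and only indicate at the end that the $-$ case is identical after interchanging $Q_2^+$ with $Q_2^-$ and heads with tails. Write $\pZ = \pZ_a^+$ and $\theta = \eps_\pZ$. First I record the arc structure of $\pZ$: its positive arcs are the pairs $\{\pZ[2k],\pZ[2k+1]\}$ and its negative arcs the pairs $\{\pZ[2k+1],\pZ[2k+2]\}$, so the two partitions interleave and each arrow $\pZ[j]$ belongs to a unique positive arc --- as its first member when $j$ is even, as its second member when $j$ is odd.

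The crux is the claim that for every positive arc $\{\pZ[2k],\pZ[2k+1]\}$, writing $O_k$ for the center of its positive cycle and $m_k := h(\pZ[2k]) = t(\pZ[2k+1])$ for the vertex shared by its two arrows, the direction from $O_k$ to $m_k$ is $\theta$. For $k=0$ this is the definition of $\eps_\pZ$; for general $k$ it is exactly the independence of $\eps_\pZ$ from the base arrow already recorded in the text (replacing $a$ by $\pZ[2k]$ reproduces the same zigzag path). I would justify that independence through the rhombus geometry of the embedding: consecutive arrows of $\pZ$ share a cycle, hence a circle-center, so the unit rhombi attached to $\pZ[2k+1]$ and to $\pZ[2k+2]$ share that center together with the common vertex $h(\pZ[2k+1])=t(\pZ[2k+2])$, and since opposite sides of each rhombus are parallel the center-to-middle-vertex direction is carried from one positive arc to the intervening negative cycle (where it becomes the antipodal $\theta+\pi$) and on to the next positive arc (where it returns to $\theta$). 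The appearance of $\theta+\pi$ rather than the reflected $-\theta$ that the definition of $\cP_\theta^+$ attaches to negative cycles --- the two agree only in symmetric situations, such as the figure above --- is precisely why I will route part (2) through the perfect-matching property instead of through the negative-cycle ray. A pleasant by-product of constancy is that $\pZ$ meets a given positive cycle in at most one arc, since two arcs in the same cycle would force their middle vertices, both in direction $\theta$ from the center, to coincide.

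Part (1) is then immediate. For a positive arc $\{\pZ[2k],\pZ[2k+1]\}$ the ray from $O_k$ in direction $\theta$ points straight at $m_k$ and so meets the circle exactly at the shared vertex $m_k = h(\pZ[2k]) = t(\pZ[2k+1])$. Reading the convention: hitting at $h(\pZ[2k])$ disqualifies $\pZ[2k]$, whereas hitting at $t(\pZ[2k+1])$ (a tail, not a head) selects $\pZ[2k+1]$. Since each positive cycle contributes exactly one arrow to the matching and the arrows of $\pZ$ in that cycle are just $\pZ[2k]$ and $\pZ[2k+1]$, this yields $\cP_\theta^+\cap\pZ = \{\pZ[i] : i \text{ odd}\}$, which is part (1).

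Part (2) needs no further geometry. As $a = \pZ[0]$ has even index, part (1) already gives $a\notin\cP_\theta^+$. For $\pZ_a^-[1]$, note that it lies in the negative cycle $C$ of $a$, which is the negative cycle of the negative arc $\{\pZ_a^+[-1],\,a\}$, and that inside $C$ the arrows $\pZ_a^+[-1]$, $a$, $\pZ_a^-[1]$ occur consecutively. By part (1) the odd-indexed arrow $\pZ_a^+[-1]$ lies in $\cP_\theta^+$, so it is the unique arrow of the perfect matching $\cP_\theta^+$ in $C$; since a cycle has length at least $3$ the predecessor $\pZ_a^+[-1]$ and successor $\pZ_a^-[1]$ of $a$ are distinct, whence $\pZ_a^-[1]\notin\cP_\theta^+$. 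The only genuine obstacle in the argument is the constancy claim of the second paragraph; once it is granted, parts (1) and (2) are a direct reading-off of the selection convention together with the perfect-matching property.
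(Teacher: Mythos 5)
Your proof is correct and follows essentially the same route as the paper: the constancy of $\eps_{\pZ}$ along the zigzag path (which the paper asserts as ``easy to check'' and you justify via the rhombus geometry) gives part (1) directly from the selection convention, and part (2) then follows from the perfect-matching property applied to the negative cycle of $a$, with $\pZ_a^+[-1]$ as its unique matched arrow. One minor note: your worry about ``$\theta+\pi$ versus the reflected $-\theta$'' is moot, since in the paper $-\theta$ denotes the antipodal direction (negation of the unit vector, as its figure shows), so the negative-cycle characterization you routed around is exactly the statement the paper uses at this point.
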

\begin{proof}
We prove the statement for $\theta=\eps_{\pZ_a^+}$. As illustrated above,
viewed from the centers of the positive cycles, $t(\pZ_a^+[i])$ points in direction $\theta$ for all odd $i$.
Viewed from the center of the negative cycle, $t(a)$ points in the direction $-\theta$, so
the arrow $b=\pZ_a^+[-1]$ with head $t(a)$ must sit in $\cP_{\theta}^+$, this cannot be $\pZ_a^-[1]$ because
the cycle containing $ab$ has length at least $3$. 
\end{proof}

We are now ready to prove the equivalence between cancellation and the existence of a consistent $\carr$-charge.
\begin{theorem}\label{rcharge}
Let $\qpol$ be a dimer model on a torus then $\qpol$ is cancellation if and
only if it admits a consistent $\carr$-charge.
\end{theorem}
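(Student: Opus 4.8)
The plan is to lean on Theorem \ref{zigzag}: since a dimer model on a torus has $\chi_\qpol=0$, that theorem identifies cancellation with condition Z, so it suffices to show that $\qpol$ satisfies condition Z if and only if it admits a consistent $\carr$-charge. The bridge between the two sides is the isoradial embedding constructed just before the statement: a consistent $\carr$-charge is exactly the data that lets one paste the polygons of $Q_2$ into a flat tiling of the universal cover $|\tilde\qpol|=\Rl^2$, with R1 expressing that the arcs of a single polygon close up to $2\pi$ and R2 that the polygon angles around a vertex close up to $2\pi$; conversely the arc lengths of such a tiling give back a grading $\carr$ with $\carr_a\in(0,2)$. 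Because $\qpol$ lives on a compact torus, this flat metric on $\Rl^2$ is complete and has no cone points, so its developing map is an isometry onto $\Rl^2$, in particular injective, and the tiling is a genuine (non-overlapping) rhombic embedding. Thus the theorem reduces to: condition Z holds iff $\tilde\qpol$ admits a periodic rhombic embedding.

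For the direction ``consistent $\carr$-charge $\Rightarrow$ Z'' (hence cancellation) I would argue inside the embedding. Each arrow $a$ is a diagonal of the rhombus spanned by $h(a),t(a)$ and the centers of its positive and negative cycles; consecutive rhombi along a zigzag path share an edge, and since opposite edges of a rhombus are parallel, all these shared ``rungs'' are parallel unit segments of a single direction, namely $\eps_\pZ$ (this is what Lemma \ref{PMZZ} records combinatorially, the matching $\cP^\pm_{\eps_\pZ}$ selecting every other arrow of $\pZ$). The two zigzag paths $\pZ^+_a$ and $\pZ^-_a$ are the two train tracks running through the rhombus of $a$, and they have \emph{different} rung directions $\eps_{\pZ^+_a}\ne\eps_{\pZ^-_a}$ because the rhombus is non-degenerate. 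As the embedding is a genuine tiling, each train is an embedded strip and two distinct trains cross at most once; a second common arrow $\pZ^+_a[i]=\pZ^-_a[j]$ with $i,j>0$ would be a second crossing. Making ``at most once'' rigorous mirrors the minimal-region argument already used in the proof of Theorem \ref{zigzag} and Remark \ref{Z1}. This gives condition Z.

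For the converse ``cancellation $\Rightarrow$ consistent $\carr$-charge'' I would build the embedding from the combinatorics. By Theorem \ref{zigzag} and Remark \ref{Z1}, cancellation yields exactly that no zigzag path self-intersects and that two distinct zigzag paths cross at most once. This is precisely the hypothesis of Kenyon and Schlenker's realization theorem \cite{rhombic}, which then produces a rhombic embedding; transporting its arc angles back to the arrows gives a grading with $\carr_a\in(0,2)$, and the tiling's closing-up conditions are literally R1 and R2. The extra point beyond the cited theorem is \emph{periodicity}: to have $\carr$ descend to the finite quiver $Q$ on the torus I must choose the directions $\eps_\pZ$ equivariantly, assigning the same direction to a zigzag path and all its deck translates, which is possible because the finitely many zigzag classes on the torus inherit a consistent cyclic order of their homology directions from condition Z.

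The main obstacle is this converse construction: turning the purely combinatorial non-crossing condition Z into an actual flat, $\ZZ^2$-periodic rhombic embedding. Everything else is bookkeeping with the isoradial dictionary and Theorem \ref{zigzag}; the real content is the realization step, for which I would rely on \cite{rhombic} together with the equivariance argument ensuring the resulting $\carr$-charge is well defined on the torus. A secondary subtlety, already present in the forward direction, is the marginal case $\carr_a\ge 1$, where a cycle need not contain the center of its circle: one must still check that the developing map is injective there, which again follows from completeness and the absence of cone points in the flat structure.
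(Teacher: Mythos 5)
Your overall strategy---reduce via Theorem \ref{zigzag} to the equivalence of condition Z with the existence of a consistent $\carr$-charge, and mediate between the two through the isoradial picture---matches the paper's, but both of your directions have a genuine gap in exactly the case the theorem is really about: the \emph{marginally} consistent case, where some $\carr_a\ge 1$ (the remark after the proof notes that the geometrically consistent case was essentially known from \cite{HanHer}). In the forward direction you argue that each zigzag path is an embedded strip of rhombi with parallel rungs and that two such strips in a genuine planar tiling cross at most once. But when $\carr_a\ge 1$ the rhombus of $a$ (spanned by $h(a)$, $t(a)$ and the two circle centers) is degenerate or inverted --- a cycle need not contain its circumcenter --- so the strips are not embedded and ``two trains cross at most once'' is no longer a consequence of planarity; completeness of the flat structure guarantees that the \emph{polygons} of $Q_2$ tile the plane, not that the rhombi do. The paper circumvents this with a degree argument: using the perfect matchings $\cP_\theta^\pm$ of Lemma \ref{PMZZ}, a second intersection of $\vec \pZ^+_a$ and $\vec \pZ^-_a$ would force the two bounding irreducible paths $p_\pm$ to satisfy both $\carr_{p_-}>\carr_{p_+}$ (taking $\theta=\eps_{\pZ_a^+}$) and $\carr_{p_-}<\carr_{p_+}$ (taking $\theta=\eps_{\pZ_a^-}$), a contradiction that needs no embeddedness of rhombi.

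In the converse direction the appeal to \cite{rhombic} is not available: the Kenyon--Schlenker realization theorem requires that no train track self-crosses \emph{and} that two distinct train tracks cross at most once, whereas condition Z only forbids the zig and zag \emph{rays} of a single arrow from meeting again --- the statement of Theorem \ref{zigzag} explicitly allows $\pZ^+_a[i]=\pZ^-_a[j]$ with $i>0$ and $j<0$, so you cannot conclude the single-crossing hypothesis and hence cannot in general produce a genuine rhombic embedding (only an isoradial-cycle embedding with possibly degenerate rhombi). The paper instead builds the $\carr$-charge directly and combinatorially: each zigzag path on the torus is periodic, hence has a well-defined homology direction $e_\pZ$ in the plane (this also disposes of your equivariance worry), one sets $\carr_a=\frac1\pi\measuredangle(e_{\pZ^-_a},e_{\pZ^+_a})$, and R1 and R2 are verified by showing that condition Z forces the zag rays around a positive cycle, and the zig rays of the arrows leaving a vertex, to be pairwise non-crossing and hence cyclically ordered, so the relevant angles sum to $2\pi$. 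To repair your outline you would need to replace both geometric steps by arguments of this degree-theoretic and angle-counting kind.
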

\begin{proof}
We will prove the equivalence of the existence of a consistent $\carr$-charge with property Z. After that we can apply theorem \ref{zigzag}.

{\bf The condition is sufficient.}
Suppose $\qpol$ has an $\carr$-charge and construct the corresponding tiling of the plane with isoradial cycles.
Suppose $\pZ_a^+[i]=\pZ_a^-[j]=b$ and let $p_+$ and $p_-$ be the positively and negatively irreducible paths in the opposite direction such that
$h(p_+)=h(p_-)=t(a)$ and $t(p_+)=t(p_-)=h(b)$.

If we take $\theta=\eps_{\pZ_a^+}$ then it follows from lemma \ref{PMZZ} 
$\deg_{\cP_\theta^+} p_+=0$. This implies that neither $a$ nor $\pZ_a^-[1]$ sit in
$\cP_\theta^+$. But $\cP_\theta^+$ must contain an arrow of the cycle through $\pZ_a^-[1]a$ so 
$\deg_{\cP_\theta^+} p_->0$. This means that $p_-=\ell^k p_+$ in $\hat A_\qpol$ for some $k>0$, so $\carr_{p_-}>\carr_{p_+}$.

On the other hand if we take $\theta=\eps_{\pZ_a^-}$ then for similar reasons
$\deg_{\cP_\theta^-} p_-=0$ but $\deg_{\cP_\theta^-} p_+>0$ and we get in $\hat A_\qpol$,  $p_+=\ell^l p_-$ for some $l>0$ and
$\carr_{p_-}<\carr_{p_+}$.
This contradicts the previous paragraph.

{\bf The condition is necessary.}
Every zigzag path $\pZ$ on the torus $|\qpol|$ is periodic and hence its lift $|\tilde \qpol|$ can be assigned a direction vector in the Euclidean plane. 
The unit vector in this direction will be denoted by $e_\pZ$. 

From condition Z, we can deduce that $e_{\pZ^+_a}\ne e_{\pZ^-_a}$ for every arrow $a$. If this were not
the case $\pZ^+_a$ and $\pZ^-_a$ would intersect an infinite number of times (in shifts of $a$ in the direction $e_{\pZ^-_a}$).

We now define an $\carr$-charge as $\frac 1\pi$ times the positive angle in clockwise direction from $e_{\pZ^-_a}$ to  $e_{\pZ^+_a}$
\[
\carr_a := \frac 1\pi \measuredangle(e_{\pZ^-_a}, e_{\pZ^+_a}). 
\]
The value of $\carr_a$ is always nonzero and smaller than $2$.

We now prove that the following two conditions hold:
\[
 \sum_{a \in c} \carr_a=2 \text{ and }\sum_{h(a)=v} (1-\carr_a)+ \sum_{t(a)=v} (1-\carr_a)=2.
\]
First look at the incidence structure of the zag rays starting from arrows around a positive cycle $c$ (i.e the $\vec \pZ^-_a$).
These rays do not intersect. If $a,b$ are consecutive arrows, the intersection of $\vec\pZ^-_a$ and $\vec\pZ^-_b$ would imply that
$\vec \pZ^+_a = a\vec\pZ^-_b$ and $\vec \pZ^-_a$ intersect twice.
If $a,b$ are not consecutive in the cycle, there must be an arrow $u$ between $a$ and $b$. But then $\vec\pZ_u^-$ must either intersect $\vec\pZ_a^-$ or $\vec\pZ_b^-$.
Proceeding in the same way, we can always find two consecutive arrows for which the zag rays intersect.

The non-intersection implies that the directions $e_{\pZ^-_a}$ are ordered on the unit circle in the same way as the arrows $a$. 
For consecutive arrows $a,b$ $e_{\pZ^+_a}=e_{\pZ^-_b}$ so the sum of the angles add up to $2\pi$ and hence the sum of the $\carr$-charges is $2$. 

We can now repeat this for the vertices. Look at all arrows leaving a vertex $v$. The zig rays of two consecutive leaving arrows do not intersect because otherwise 
we could follow the second zig path backwards inside the piece cut out by the two zig rays. This backwards path must leave this piece by an arrow $b$ of the first zig ray (because the second zag path cannot selfintersect). But now the zigzag rays from $b$ intersect twice, which contradicts Z.
If two zag rays of non-consecutive leaving arrows intersect then the zig ray of an arrow in between must intersect one of these zig rays so we can always reduce to the consecutive case. 

The angle between the directions of the zig rays of 2 consecutive leaving arrows $a_1,a_2$ is $\pi(2-\carr_{a_1} -\carr_b)$ where $b$ is the incoming arrow between $a_1$ and $a_2$. The fact that these angles add up to $2\pi$ gives us the second consistency condition.
\[
\xymatrix@C=.3cm@R=.5cm{
&&\ar@{<-}[rd]_{\pZ_b^-}^{\pZ_a^+}&&&&\ar@{<-}[ld]^{\pZ_a^-}\ar@{<.>}@/_/[llll]|{\pi\carr_a}&&\\
&&&\vtx{}\ar[ld]&&\vtx{}\ar[ll]_b&&&\\
\ar@{<-}[rr]&&\vtx{}\ar[rd]&&c&&\vtx{}\ar[lu]_a&&\ar@{<-}[ll]\\
&&&\vtx{}\ar[rr]&&\vtx{}\ar[ru]&&&\\
&&\ar@{<-}[ru]&&&&\ar@{<-}[lu]&&\\
}
\hspace{1cm}
\xymatrix@C=.3cm@R=.5cm{
&&&&\ar@{<-}[rd]^{\pZ_{a_1}^+}&&&&\\
&&&\vtx{}&&\vtx{}\ar@{<-}[ld]^{a_1}&&\ar@{<-}[ll]^{\pZ_{a_1}^-}_{\pZ_b^+}\ar@{<.>}@/_20pt/[ulll]|{\pi\carr_{a_1}}&\\
&&\vtx{}\ar@{<-}[rr]_{a_2}&&\vtx{v}\ar@{<-}[lu]_b\ar@{<-}[rr]\ar@{<-}[ld]&&\vtx{}&&\\
&\ar@{<-}[ru]^{\pZ_b^-}_{\pZ_{a_2}^+}\ar@{<.>}@/_40pt/[uurrrrrr]|{\pi\carr_b}&&\vtx{}&&\vtx{}\ar@{<-}[lu]&&&\\
&&&&\ar@{<-}[ru]&&&&
}
\]
\end{proof}
\begin{remark}
The first part of this theorem is an extension of Lemma 5.3.1
in \cite{HanHer} to the marginally consistent case.
\end{remark}
\begin{remark}\label{goodtiling}
The second part of the theorem gives us an $\carr$-charge from the directions of the zigzag paths in the plane.
We can use this $\carr$-charge to construct an embedding with isoradial cycles. Note however that the angles between the zigzag paths in this new embedding are in general not the same as the ones we used to construct the $\carr$-charge.
We can recover these original directions from the embedding with isoradial cycles, because the $e_{\pZ_a^+}$ in the original embedding point precisely in the directions $\eps_{\pZ_a^+}$ of the new embedding.
\end{remark}

\section{Algebraic consistency}

In \cite{broomhead} Broomhead introduced the notion of algebraic consistency.
For this he constructed a second algebra from the dimer model: $B_\qpol$.
From $\qpol$ one can construct the following diagram of maps
\[
\Z \stackrel{e}{\ot} \Z^{Q_2} \stackrel{d}{\to} \Z^{Q_1} \stackrel{d}{\to} \Z^{Q_0} 
\]
with $e(c)=1$ and $d(c)=\sum_{a\in c} a$ for any cycle $c\in Q_2$ and $d(a)=h(a)-t(a)$.
We define $M=\Z^{Q_1}/de^{-1}(0)$ and for any vertices $i,j$ we set 
\[
M^+_{ij}=\frac{d^{-1}(i-j)\cap \N^{Q_1}}{de^{-1}(0)}.
\]
Then the $B$-algebra is defined as
\[
B_\qpol = \bigoplus_{i,j\in Q_0}\Span(M^+_{ij}) \subset \Mat_{|Q_0|}(\C[M])
\]
There is a natural map $\tau:A_\qpol\to B_\qpol: a \mapsto a \in \Span(M_{h(a)t(a)})$.
\begin{definition}
A dimer model is called algebraicly consistent if and only if $\tau:A_\qpol\to B_\qpol$ is 
an isomorphism.
\end{definition}

In \cite{broomhead} Broomhead proved: 
\begin{theorem}[Broomhead]
If a quiver polyhedron admits a geometrically consistent $\carr$-charge then it is algebraically consistent.
\end{theorem}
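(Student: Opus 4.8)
The plan is to show that the algebra homomorphism $\tau\colon A_\qpol\to B_\qpol$ is bijective by splitting the statement along the natural $M$-grading that both sides carry. The multidegree of a path (the vector in $\N^{Q_1}$ recording how often each arrow occurs) descends to a well-defined class in $M=\Z^{Q_1}/de^{-1}(0)$ on $A_\qpol$, because each Jacobi relation $r_a=p^+-p^-$ is a difference of paths whose multidegrees differ by $d(p^+a)-d(p^-a)=d(p^+a-p^-a)\in de^{-1}(0)$, the element $p^+a-p^-a$ lying in $e^{-1}(0)$ since $e(p^+a-p^-a)=1-1=0$. Hence $\tau$ is a homomorphism of $M$-graded algebras, and it suffices to prove that for every pair $i,j\in Q_0$ it restricts to a bijection from the span of the paths $j\to i$ onto $\Span(M^+_{ij})$. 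Unwinding the definitions, this amounts to two statements: \emph{injectivity}, that any two paths $p,q\colon j\to i$ whose multidegrees agree in $M$ are already equal in $A_\qpol$; and \emph{surjectivity}, that every class in $M^+_{ij}$ is the multidegree class of an honest path $j\to i$.

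First I would set up the tools coming from consistency. A geometrically consistent $\carr$-charge is in particular consistent, so by Theorem~\ref{rcharge} the model $\qpol$ is a cancellation algebra and condition Z holds; by Theorem~\ref{cancelcover} the universal cover $\tilde\qpol$ is cancellation as well, and it carries the isoradial embedding together with the perfect matchings $\cP_\theta^\pm$ of Lemma~\ref{PMZZ}. Two facts from the cancellation argument of Theorem~\ref{zigzag} are what I would reuse: in the contractible cover any two paths with the same endpoints are equal up to a power of $\ell$, and a nonnegative multidegree records, via the induced map to $H_1$ of the torus, the deck translation between the two endpoint lifts.

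For injectivity, take $p,q\colon j\to i$ with $[m_p]=[m_q]$ in $M$ and lift them to $\tilde p,\tilde q$ in $\tilde\qpol$ starting at a common lift $\tilde\jmath$ of $j$. Equality of the $M$-classes forces equal wrapping, hence $\tilde p$ and $\tilde q$ end at the same lift $\tilde\imath$ of $i$; by cancellation in the cover they then coincide up to a power of $\ell$, and since appending $\ell$ strictly raises the multidegree, the equal classes force this power to be zero. Thus $\tilde p=\tilde q$, so $p=q$ already in $A_\qpol$ because cancellation embeds $A_\qpol$ into $\hat A_\qpol$. Equivalently, $pq^{-1}\in i\hat A_\qpol i$ has trivial $M$-class, and the multidegree is faithful on the localization, whence $pq^{-1}=i$.

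Surjectivity is where I expect the real work. Fix a class $[m]\in M^+_{ij}$ with nonnegative representative $m$ and $d(m)=i-j$. Its wrapping singles out an endpoint lift $\tilde\imath$, and strong connectivity of $\tilde\qpol$ produces \emph{some} path $P_0\colon j\to i$ with that wrapping; by the previous paragraph the multidegree classes attained by paths landing at $\tilde\imath$ form exactly a ray $\{[m_0]+k[\ell]\mid k\ge 0\}$. The content to be proved is that $[m]$ lies on this ray, i.e. that the given nonnegative vector $m$ is not \emph{too small} to be a path: there are no lattice holes below the minimal path degree. This is precisely the point where the strict inequalities $\carr_a<1$ are needed. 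I would argue it with the matchings $\cP_\theta^\pm$: each zigzag direction $\theta$ yields a perfect matching whose degree $\deg_{\cP_\theta^\pm}$ is a linear functional on $M$, and Lemma~\ref{PMZZ} identifies, arrow by arrow, which matchings a minimal path meets; geometric consistency guarantees that these functionals cut out a cone whose lattice points are saturated by path degrees, so that every nonnegative $m$ with the correct boundary and wrapping dominates the minimal path. Granting this, $[m]-[m_0]=k[\ell]$ for some $k\ge 0$ and $m$ is realized by $P_0\ell^{k}$. The main obstacle is therefore this saturation step: translating \emph{$\carr_a<1$ for all $a$} into the statement that no admissible multidegree under-shoots the minimal path degree, all the remaining ingredients following fairly directly from cancellation and the universal-cover description already established.
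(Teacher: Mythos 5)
Your overall skeleton is the right one and matches how this theorem is actually proved: $\tau$ is a map of $M$-graded algebras, injectivity on each graded piece follows from cancellation (lift to the universal cover, note that equal $M$-classes force the same endpoint lift, conclude the paths agree up to a power of $\ell$, and kill that power using any functional with $\deg\ell\neq 0$), and everything reduces to surjectivity, i.e.\ to showing every class in $M^+_{ij}$ is realized by a path. The problem is that at exactly this point you stop proving and start asserting. The sentence ``geometric consistency guarantees that these functionals cut out a cone whose lattice points are saturated by path degrees'' \emph{is} the theorem; nothing you have established implies it, and you concede as much when you call it ``the main obstacle.'' The precise statement you need is the one the paper isolates as Broomhead's Lemma 6.1.1: for every pair of vertices $v,w$ in the universal cover there exists a path $p:v\to w$ and a perfect matching $\tilde\cP$ with $\deg_{\tilde\cP}p=0$. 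Granting that, your ray argument closes immediately: for $[m]\in M^+_{ij}$ with the same wrapping as $p$ one has $[m]-[m_p]=k[\ell]$, and applying $\deg_{\tilde\cP}$ gives $k=\deg_{\tilde\cP}[m]\ge 0$ since $m$ is nonnegative, so $[m]=[m_{\ell^k p}]$. Without it, nothing rules out an admissible nonnegative class strictly below the minimal path class.

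Producing such a matching-avoiding path is where essentially all of the geometric content of the proof lives, and it is nontrivial: in the paper's treatment one takes the matchings $\cP_i=\cP_{\theta_i}^+$ attached to the zigzag directions, shows that every vertex has an incoming and an outgoing arrow avoiding $\cP_{i-1}\cup\cP_i$ (this uses condition R2, via the count of how many times the arrows at a vertex wind around the unit circle), extends these to rays $\cY_i^v$ that are antiparallel to the zigzag path $\pZ_i$ (parallel being excluded by Lemma \ref{opp}), and then traps an arbitrary target vertex $w$ inside a region bounded by two paths of equal $\deg_{\cP_i}$, so that the relation-moves connecting them sweep through $w$. None of this appears in your proposal, so the proof is not complete. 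A secondary remark: pinning the saturation step on the strict inequalities $\carr_a<1$ is not quite right either --- the paper's Theorem \ref{algcon} establishes the same conclusion for marginally consistent charges as well, precisely by replacing Broomhead's matchings with the $\cP_\theta^\pm$ you mention; the strictness buys convenience in Broomhead's original argument, not the truth of the saturation.
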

In this section we will extend this result to any consistent $\carr$-charge. The proof given will follow the same lines 
as Broomhead's proof. In particular we will use the following important lemma:
\begin{lemma}\cite{broomhead}[Broomhead Lemma 6.1.1]
If $\qpol$ is a dimer model on a torus then
$A_\qpol$ is algebraically consistent if it is cancellation and for every pair of vertices $v,w$ in the universal cover $\tilde \qpol$, there is a path $p:v \to w$ and a perfect matching $\tilde\cP$ that does not meet $p$. 
\end{lemma}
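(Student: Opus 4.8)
The plan is to show that $\tau$ is both injective and surjective by working block by block. Both $A_{\qpol}$ and $B_{\qpol}$ decompose as $\bigoplus_{i,j}e_i(-)e_j$, and each $e_iB_{\qpol}e_j=\Span(M^+_{ij})$ is graded by $M$ with every graded piece at most one-dimensional. The map $\tau$ is well defined because each relation $r_a$ has arrow content $d(c^+-c^-)\in de^{-1}(0)$, so $\tau$ respects this $M$-grading. It therefore suffices to prove that for every $m\in M^+_{ij}$ the paths $j\to i$ with $\tau$-class $m$ span a one-dimensional space: the ``at most one-dimensional'' part is injectivity and the ``nonzero'' part is surjectivity.

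Injectivity I expect to be the direct part. I would first invoke Theorem~\ref{cancelcover} to transfer cancellation to the universal cover, where $|\tilde\qpol|$ is contractible. If $p,q\colon j\to i$ satisfy $\tau(p)=\tau(q)$, then the arrow contents of $p$ and $q$ differ by an element of $de^{-1}(0)\subseteq\Image(d|_{Q_2})$, so $p$ and $q$ are homologous; lifting from a common start $\tilde j$ they end at the same vertex $\tilde i$. Since the universal cover is simply connected, every loop in $\hat A_{\tilde\qpol}$ is a power of the central element $\ell$, whence $\tilde p=\tilde q\,\ell^{k}$ for some $k\in\Z$; projecting the covering map $\hat A_{\tilde\qpol}\to\hat A_{\qpol}$ gives $p=q\,\ell^{k}$. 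Any perfect matching $\cP$ yields a grading with $\deg_\cP\ell=1$ that factors through $M$ (each cycle of $Q_2$ has $\cP$-degree $1$), so $\tau(p)=\tau(q)$ forces $\deg_\cP p=\deg_\cP q$ and hence $k=0$. Then $p=q$ in $\hat A_{\qpol}$, and cancellation ($A_{\qpol}\hookrightarrow\hat A_{\qpol}$) gives $p=q$ in $A_{\qpol}$. This step uses only the existence of one perfect matching.

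Surjectivity is where I expect the real difficulty and where the full hypothesis enters. Given $m\in M^+_{ij}$ represented by a nonnegative arrow vector $n$ with $d(n)=i-j$, the task is to reorder the arrows of $n$ (modulo $de^{-1}(0)$, staying nonnegative) into an honest walk from $j$ to $i$. The equality $d(n)=i-j$ is exactly the Eulerian flow balance at every vertex, so the only obstruction is connectivity of the support of $n$. To clear it I would use the hypothesis: lifting to $\tilde i,\tilde j$ it supplies a backbone path $p_0\colon\tilde j\to\tilde i$ together with a periodic perfect matching $\tilde\cP$ with $\deg_{\tilde\cP}p_0=0$, descending to a torus path of $\cP$-degree $0$ realizing the minimal class $\tau(p_0)\in M^+_{ij}$. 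I would then induct on the degree of $n$: whenever the support splits off a cyclic piece disjoint from the backbone, the manifold condition together with the avoiding matching lets me apply a relation $r_a$ near the backbone to absorb the stray cycle without increasing the arrow count, eventually producing connected support through $j$ and $i$, at which point the Eulerian walk exists. Cancellation then guarantees that the resulting path is independent of the choices made.

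The main obstacle is therefore this assembly step for surjectivity: proving that the flow condition $d(n)=i-j$, clearly necessary, is also \emph{sufficient} to realize $m$ by a path once the avoiding-matching hypothesis is available. Concretely I would isolate it as the claim that the nonnegative classes in $M^+_{ij}$ are exactly those reachable from $\tau(p_0)$ by successively adding full cycles (multiplication by $\ell$, which raises every $\deg_\cP$ by one) and homology shifts arising from differences of perfect matchings, each such move being realizable by an actual path. This is the toric/combinatorial core of the argument, and it is precisely the place where inconsistent dimers fail; the role of matchings avoiding connecting paths is to guarantee that the relevant cone of matching-degrees is generated by realizable moves, so that the induction never stalls.
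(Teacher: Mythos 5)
First, a point of order: the paper does not prove this lemma at all --- it is imported verbatim from Broomhead's paper as ``Lemma 6.1.1'' and used as a black box in the proof of Theorem~\ref{algcon}. So there is no in-paper proof to compare against, and your proposal has to be judged on its own. Your injectivity half is correct and is essentially the standard argument (it is the same mechanism as the paper's Lemma~\ref{homotop}): two paths with the same class in $M$ are homologous, hence their lifts to the contractible cover share endpoints, hence they differ by $\ell^k$ in $\hat A_{\tilde\qpol}$; any single perfect matching gives a degree that descends to $M$ and takes the value $1$ on $\ell$, forcing $k=0$; cancellation then pulls the equality back from $\hat A_{\qpol}$ to $A_{\qpol}$. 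That part I would accept as written.

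The surjectivity half has a genuine gap, and you have located it yourself: the step ``the manifold condition together with the avoiding matching lets me apply a relation $r_a$ near the backbone to absorb the stray cycle'' is not an argument, and I do not see how to make it one --- a connected component of the support of $n$ disjoint from the backbone need not contain all the arrows of any single cycle of $Q_2$, so there is no relation you can apply to it while staying nonnegative, and ``absorbing'' it would in general require a long chain of moves whose termination you have not controlled. More importantly, the whole Eulerian-connectivity framing is a detour. The actual mechanism is a short lattice computation: for a dimer on a torus, $M_{ii}\cong H_1(|\qpol|,\Z)\oplus\Z\ell\cong\Z^3$, so two classes in $M_{ij}$ with the same homology class differ by $k\ell$ for a unique $k\in\Z$. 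Given $m\in M^+_{ij}$, its homology class picks out a lift $\tilde\imath$ of $i$ relative to $\tilde\jmath$; the hypothesis hands you a path $p\colon\tilde\jmath\to\tilde\imath$ and a matching $\tilde\cP$ with $\deg_{\tilde\cP}p=0$. Then $m-\tau(p)=k\ell$ with $k=\deg_{\tilde\cP}(m)-\deg_{\tilde\cP}(\tau(p))=\deg_{\tilde\cP}(m)\ge 0$, the last inequality because $m$ has a nonnegative representative and $\deg_{\tilde\cP}$ is nonnegative on $\N^{Q_1}$ and descends to $M$. Hence $m=\tau(\ell^k p)$ is the class of an honest path, with no connectivity or cycle-absorption argument needed; the sole role of the avoiding matching is to certify that $\tau(p)$ is the \emph{minimal} element of $M^+$ in its homology class. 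Your closing paragraph gestures at exactly this (``reachable from $\tau(p_0)$ by multiplication by $\ell$ and homology shifts'') but leaves it as an unproven claim entangled with the unnecessary Eulerian machinery, so as it stands the surjectivity direction is not established.
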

We also need these 3 lemmas.
\begin{lemma}\label{homotop}
Suppose $A_\qpol$ is cancellation and $\deg:Q_1 \to \R$ is any (not necessarily positive) grading such that $\deg\ell\ne 0$. 
Then two paths in $A_{\qpol}$ are equivalent if and only if they are homotopic and have the same $\deg$-degree.
\end{lemma}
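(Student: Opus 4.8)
First I would fix the reading of ``grading'': it must be taken in the strong sense that the ideal of relations is homogeneous, so that $\deg$ descends to $A_\qpol$ and $\deg\ell$ is well defined. Since each relation $r_a$ forces $\deg p^+=\deg p^-$, the two faces meeting along $a$ have equal degree, and connectedness of $|\qpol|$ then forces all cycles in $Q_2$ to share a single degree, necessarily equal to $\deg\ell$. With this in hand the forward implication is immediate. If $p=q$ in $A_\qpol$ then, the relations being binomial, $q$ is obtained from $p$ by finitely many elementary moves, each replacing a subpath $p^+$ by $p^-$ for some arrow $a$ (with $p^+a\in Q_2^+$, $p^-a\in Q_2^-$). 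Each move preserves $\deg$, because $\deg p^+=\deg\ell-\deg a=\deg p^-$, and it is a homotopy rel endpoints, because the loop $p^+(p^-)^{-1}=(p^+a)(p^-a)^{-1}$ is the boundary of the two polygons glued along $a$, hence null-homotopic in $|\qpol|$. Thus $p=q$ in $A_\qpol$ implies $p\simeq q$ and $\deg p=\deg q$.

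For the converse I would pass to the universal cover. Assume $p\simeq q$ rel endpoints and $\deg p=\deg q$. Since $\chi_\qpol=0$, the cover $|\tilde\qpol|$ is the plane; fixing a lift of the common tail produces lifts $\tilde p,\tilde q$, and because $p\simeq q$ rel endpoints the two lifts share the same head as well. By Theorem~\ref{cancelcover} the algebra $A_{\tilde\qpol}$ is again cancellation, and pulling $\deg$ back along the covering gives $\deg\tilde p=\deg p=\deg q=\deg\tilde q$ with $\deg\ell$ unchanged. The heart of the argument is then the following claim: \emph{in the cancellation algebra $A_{\tilde\qpol}$ any two positive paths with the same head and tail agree up to a power of $\ell$}, that is, $\tilde p=\ell^{k}\tilde q$ in $A_{\tilde\qpol}$ for some $k\in\Z$. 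Granting this, taking degrees yields $\deg\tilde p=\deg\tilde q+k\deg\ell$; since $\deg\tilde p=\deg\tilde q$ and $\deg\ell\neq0$ we get $k=0$, so $\tilde p=\tilde q$ in $A_{\tilde\qpol}$. Finally the covering projection $\tilde\qpol\to\qpol$ carries the sequence of relations $r_{\tilde a}$ witnessing $\tilde p=\tilde q$ to a sequence of relations $r_a$ downstairs, giving $p=q$ in $A_\qpol$.

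It remains to prove the claim, and \textbf{this is where the real work and the only use of cancellation lie}. Because the plane is contractible, $\tilde p\,\tilde q^{-1}$ bounds a disk $D$ tiled by the faces of $\tilde\qpol$, and I would induct on the number of faces in $D$. When $D$ contains no face the two reduced paths coincide and $k=0$. For the inductive step one looks along the boundary for a face that can be peeled off by a single relation $p^+\leftrightarrow p^-$ (possibly after splitting a central $\ell$ to the front), strictly decreasing the enclosed area; this is precisely the shrinking procedure carried out in the sufficiency half of the proof of Theorem~\ref{zigzag}, where cancellation---equivalently condition~Z---is exactly what guarantees that no \emph{irreducible mixed pair} (one path positively, the other negatively irreducible) obstructs the reduction. \textbf{The main obstacle is thus turning the two-dimensional null-homotopy filling $D$ into a one-dimensional chain of relation moves without getting stuck}; controlling this induction, and verifying that cancellation always supplies the face needed to reduce the area, is the technical core of the lemma. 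Without cancellation the reduction halts at such an irreducible pair, which is exactly why the statement can fail if the hypothesis is dropped.
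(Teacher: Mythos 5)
Your proof is correct and follows essentially the same route as the paper: both reduce the converse to the key fact that homotopic paths with common endpoints agree up to a power of $\ell$, after which $\deg\ell\neq 0$ forces that power to be zero and cancellation gives equality in $A_\qpol$ itself. The only difference is one of detail: the paper asserts the key fact in a single line (homotopies are generated by substitutions $p\to q$ with $pq^{-1}=\ell$), whereas you justify it by lifting to the universal cover and invoking the shrinking argument from the proof of Theorem~\ref{zigzag}, which is exactly where that assertion is substantiated.
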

\begin{proof}
It is clear that the relations $r_a$ imply that equivalent paths are homotopic and must have the same degree.
Because homotopies in the dimer model are generated by substituting paths $p\to q$ such that
$pq^{-1}=\ell$, homotopic paths can only differ by a factor $\ell^k$. The degree of $\ell$ is not zero, so
if homotopic paths have the same degree they must be equal in $A_{\qpol}$.
\end{proof}
\begin{lemma}\label{opp}
If $A_\qpol$ admits a consistent $\carr$-charge, $\cP$ is a perfect matching and $p$, $q$ are cyclic paths with opposite
homology classes then either $p$ or $q$ meets $\cP$.
\end{lemma}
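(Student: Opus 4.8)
The plan is to reduce the conclusion to a single numerical inequality about the perfect-matching grading and then force it strictly. A path \emph{meets} $\cP$ exactly when it contains an arrow of $\cP$, i.e. when its $\deg_\cP$-degree is positive; since $\deg_\cP$ is a nonnegative grading it therefore suffices to prove $\deg_\cP(p)+\deg_\cP(q)\ge 1$. Throughout I will use two consequences of the hypothesis: by Theorem~\ref{rcharge} the existence of a consistent $\carr$-charge makes $\qpol$ a cancellation algebra, so that Lemma~\ref{homotop} applies; and $\carr$ is itself a grading with $\carr_a>0$ for every arrow and, by R1, with $\carr(\ell)=2$. Both $\deg_\cP$ and $\carr$ thus qualify as gradings with nonzero $\ell$-degree in the sense of Lemma~\ref{homotop}.

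The construction is to build two loops based at a common vertex that differ only by $p$ and $q$. Say $p$ is a cyclic path at $v$ and $q$ a cyclic path at $w$. Using strong connectedness of $Q$, choose honest paths $\gamma\colon w\to v$ and $\delta\colon v\to w$, and set
\[
c' = p\,\gamma\, q\, \delta,\qquad c'' = \gamma\,\delta,
\]
both of which are loops at $v$. Comparing homology classes in $H_1(|\qpol|)=H_1(\mathbb T)=\Z^2$, the connecting pieces cancel and $[c']-[c'']=[p]+[q]$, which is $0$ precisely because $p$ and $q$ have opposite homology classes. Since $\pi_1$ of the torus is abelian, equality of $H_1$-classes for the two based loops $c',c''$ means they are homotopic rel $v$. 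Lemma~\ref{homotop} (together with cancellation) then gives that homotopic honest paths differ by a factor $\ell^k$, so $c'=\ell^k c''$ in $A_\qpol$ for some integer $k$.

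It remains to pin down $k$, and here is where the two gradings play complementary roles. Applying $\deg_\cP$ to the equality $c'=\ell^k c''$ and using $\deg_\cP(\ell)=1$ gives
\[
k=\deg_\cP(c')-\deg_\cP(c'')=\deg_\cP(p)+\deg_\cP(q),
\]
which is in particular a nonnegative integer. Applying instead the $\carr$-grading, with $\carr(\ell)=2$, gives $2k=\carr(c')-\carr(c'')=\carr(p)+\carr(q)$; as $p$ and $q$ are cyclic paths of positive length and every arrow has $\carr_a>0$, the right-hand side is strictly positive, whence $k\ge 1$. Combining the two readings yields $\deg_\cP(p)+\deg_\cP(q)=k\ge 1$, so at least one of $p,q$ has positive $\deg_\cP$-degree and therefore meets $\cP$. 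The only delicate point is guaranteeing that $k$ is simultaneously an \emph{integer} and \emph{strictly positive}: integrality is secured by detecting $k$ with the integer-valued $\deg_\cP$, while strict positivity is secured by detecting it with the strictly positive $\carr$-charge — which is exactly the leverage the consistent $\carr$-charge provides beyond bare cancellation.
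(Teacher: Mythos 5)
Your proof is correct and follows essentially the same route as the paper's: both arguments splice $p$ and $q$ into a single path via connecting pieces, invoke Lemma~\ref{homotop} (hence cancellation, via Theorem~\ref{rcharge}) to identify the result with a reference path up to a power of $\ell$, and then play the integer-valued grading $\deg_\cP$ against the strictly positive $\carr$-charge. The only cosmetic difference is that you argue directly and compute the exponent $k$ explicitly, whereas the paper assumes $\deg_\cP p=\deg_\cP q=0$ and derives a contradiction from the $\carr$-degrees.
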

\begin{proof}
Suppose $\deg_\cP p=\deg_\cP q=0$. 
Take any path $r$ from $h(p)$ to $h(q)$, then $\deg_\cP prq=\deg_\cP r$ and $prq$ and $r$ have the same homology
class, so by lemma \ref{homotop} they must be the same. But this is impossible because for the $\carr$-charge, $prq$ and $r$ must have different degrees.  
\end{proof}\label{angle}
\begin{lemma}
If $\qpol$ satisfies condition Z and $\chi_\qpol=0$ then
given a zigzag path $\pZ_1$ in the universal cover, there is always another zigzag path $\pZ_2$ making a positive angle with $\pZ_1$ less than $\pi$ radians: $0<\measuredangle(e_{\pZ_1}, e_{\pZ_2})<\pi$.
\end{lemma}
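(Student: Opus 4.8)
The plan is to argue by contradiction, reducing everything to a statement about the set of directions $\{e_\pZ\}$ on the unit circle. Condition Z together with $\chi_\qpol=0$ guarantees (as established in the proof of Theorem \ref{rcharge}) that every zigzag path $\pZ$ is periodic with a well-defined direction $e_\pZ$, arising from a nonzero homology class $v_\pZ\in H_1(|\qpol|;\Z)\cong\Z^2$ via $e_\pZ=v_\pZ/|v_\pZ|$. Suppose the conclusion fails for $\pZ_1$, i.e.\ no zigzag path $\pZ$ satisfies $\measuredangle(e_{\pZ_1},e_{\pZ})\in(0,\pi)$. Normalizing $e_{\pZ_1}$ to the positive $x$-direction, and recalling that $\measuredangle$ is the clockwise angle, this says every direction $e_\pZ$ lies in the closed upper half-circle, equivalently every class $v_\pZ$ lies in the closed half-plane $\{y\ge 0\}$ whose boundary line is spanned by $v_{\pZ_1}$. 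I would then show this confinement is impossible.

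The key input is that the homology classes of the zigzag paths sum to zero. Each arrow $a$ lies in exactly the two zigzag paths $\pZ_a^+$ and $\pZ_a^-$, and by Remark \ref{Z1} each zigzag path is simple, so as cellular $1$-chains on $|\qpol|$ one has $\sum_\pZ \pZ = 2\sum_{a\in Q_1} a$. On the other hand, summing the cellular boundaries of the positive $2$-cells gives $\partial(\sum_{c\in Q_2^+} c)=\sum_{a\in Q_1} a$, since every arrow lies in exactly one positive cycle by the orientability condition DO; hence $\sum_a a$ is a boundary and $\sum_\pZ[\pZ]=2[\sum_a a]=0$ in $H_1(|\qpol|;\Z)$. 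Now if all the $v_\pZ$ lie in $\{y\ge 0\}$ and sum to zero, then summing the $y$-coordinates (each $\ge 0$) forces every $v_\pZ$ onto the $x$-axis; thus every zigzag direction equals $\pm e_{\pZ_1}$, and there are at most two distinct zigzag directions.

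To finish I would contradict this by exhibiting three distinct directions. Pick any positive cycle $c=a_1\cdots a_k$ with $k\ge 3$. From the analysis in the proof of Theorem \ref{rcharge}, the zag directions $e_{\pZ^-_{a_i}}$ are obtained by winding clockwise once around the unit circle, the consecutive gaps being the strictly positive angles $\pi\carr_{a_i}$ that sum to $2\pi$; since the partial sums strictly increase and first reach $2\pi$ only after all $k\ge 3$ steps, these $k$ directions are pairwise distinct. This yields at least three distinct zigzag directions, contradicting the previous paragraph, so the required $\pZ_2$ exists. I expect the second paragraph to be the main obstacle: the naive route through the winding of a single cycle fails, because three directions that wind once around the circle can nevertheless all lie in a closed half-circle (one gap may exceed $\pi$), so one genuinely needs the global homological identity $\sum_\pZ[\pZ]=0$ to first pin all directions onto a line before the cycle-winding can be brought to bear.
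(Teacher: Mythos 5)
Your proof is correct, but it takes a genuinely different route from the paper. The paper argues locally with intersections: it picks the zigzag path $\pZ_2$ realizing the smallest angle with $\pZ_1$, takes an arrow $a$ in their intersection, and then uses the auxiliary path $\pZ_b^+$ with $b=\pZ_a^+[-1]$ to force a second intersection with $\pZ_1$ or $\pZ_2$, contradicting condition Z; the only inputs are Z and Remark \ref{Z1}. You instead argue globally: the identity $\sum_{\pZ}[\pZ]=2\bigl[\sum_a a\bigr]=0$ (each arrow lies in exactly two zigzag paths, and $\sum_a a$ bounds $\sum_{c\in Q_2^+}c$ by DO) pins all directions onto the line through $e_{\pZ_1}$ once they are confined to a closed half-plane, and the cyclic ordering of the zag directions around a positive cycle with strictly positive gaps summing to $2\pi$ (imported from the proof of Theorem \ref{rcharge}) then yields at least three distinct directions, a contradiction. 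Your version buys a cleaner argument that sidesteps the paper's mildly delicate implicit steps (that the minimizing $\pZ_2$ actually intersects $\pZ_1$ in an arrow, and the two-case analysis there), and it isolates the useful structural fact that the zigzag homology classes sum to zero, which the paper never states; the cost is a heavier dependence on the machinery of Theorem \ref{rcharge}, whereas the paper's proof needs only condition Z. Since the lemma sits after Theorem \ref{rcharge} and is used only in Theorem \ref{algcon}, there is no circularity. Two small points you should make explicit: the classes $v_\pZ$ are nonzero (a null-homologous zigzag path would lift to a closed, hence self-intersecting, path in the universal cover, contradicting Remark \ref{Z1}), and your correct observation that winding once around the circle alone does not preclude all directions lying in a closed half-circle is exactly why the homological step cannot be dropped.
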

\begin{proof}
Suppose that the lemma does not hold. Let $\pZ_2$ be the zigzag path whose angle with $\pZ_1$ is smallest and
let $a$ be an arrow in their intersection. There are two possibilities:
$\pZ_1 = \vec \pZ_a^+$ and $\pZ_2 = \pZ_a^-$ or $\pZ_1 = \vec \pZ_a^-$ and $\pZ_2 = \pZ_a^+$.
In the first case, the directions in the zigzag paths show that there must be another arrow in the intersection behaving like the second case.
\[
\xymatrix@C=.5cm@R=.3cm{\pZ_1&&\vtx{}\ar@{.>}[lld]\ar@{.>}[rrd]&&\pZ_2\\
\pZ_2&&\vtx{}\ar[u]\ar@{<.}[llu]\ar@{<.}[rru]&&\pZ_1}
\]
So suppose $\pZ_1 = \vec \pZ_a^+$ and $\pZ_2 = \pZ_a^-$ and let $b=\pZ_a^+[-1]$.
By our hypothesis, the zigzag path $\pZ_b^+$ makes a positive angle with $\pZ_1$ that is at least as big as the angle with $\pZ_2$.
Therefore the $\pZ_b^+$ must intersect $\pZ_1$ a second time, but by condition Z $\pZ_b^+[i]=\pZ_1[j] \implies ij<0$. This also implies an intersection
of $\pZ_b^+$ with $\pZ_2$. This implies that $\pZ_2$ and $\pZ_b^+$ cannot have the same direction otherwise they would intersect multiple times in this direction (take shifts of the intersection). So $\pZ_b^+$ makes a positive angle with $\pZ_1$ that is bigger than the angle with $\pZ_2$. Now this implies a second intersection with $\pZ_2$ contradicting Z.
\[
\xymatrix@C=.4cm@R=.2cm{
\pZ_1\ar@{..}[rrdd]&&\ar@{.>}@(l,u)[lddddddd]&&&\pZ_b^+\ar@{..}@(d,r)[llddddddd]&\pZ_2\ar@{.>}[lllddd]\\
&&&&&&\\
&&\vtx{}\ar[rd]_b&&&&\\
&&&\vtx{}\ar[d]_a\ar@{..}@(u,r)[uuul]&&&\\
&&&\vtx{}\ar@{.>}[rrrddd]\ar@{.>}[lllddd]&&&\\
&&&&&&\\
&&&&&&\\
\pZ_2&\pZ_b^+&&\ar@{.>}@(l,d)[uuuuul]&&&\pZ_1\\
}
\]
\end{proof}

\begin{theorem}\label{algcon}
A dimer model with $\chi_\qpol=0$ is algebraically consistent if and only if it is cancellation.
\end{theorem}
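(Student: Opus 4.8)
The plan is to prove both implications, of which the forward one (algebraic consistency $\Rightarrow$ cancellation) is routine and the backward one (cancellation $\Rightarrow$ algebraic consistency) carries all the weight.

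For the forward direction I would transport cancellation across the isomorphism $\tau$ and verify it inside $B_\qpol\subset \Mat_{|Q_0|}(\C[M])$. Every arrow $a$ maps to a single-entry matrix whose nonzero entry, in position $(h(a),t(a))$, is the group element $[a]\in M$; likewise any path $p$ maps to a single-entry matrix with entry the monomial $[p]\in M$. Since elements of $M$ are units in the group algebra $\C[M]$ they are non-zero-divisors. Hence if $pa=qa$ with $p,q$ parallel, then $\tau(p)$ and $\tau(q)$ are single-entry matrices at the same position $(h(p),h(a))$ and $\tau(p)\tau(a)=\tau(q)\tau(a)$ forces $[p][a]=[q][a]$, so $[p]=[q]$ after cancelling the unit $[a]$; therefore $\tau(p)=\tau(q)$ and $p=q$. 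The left-cancellation case is symmetric.

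For the backward direction I would invoke the Broomhead Lemma quoted above: since $\qpol$ is cancellation it only remains to show that for every pair of vertices $v,w$ of the universal cover there is a directed path $p\colon v\to w$ and a perfect matching $\tilde\cP$ not meeting it. First, cancellation supplies (by Theorem \ref{rcharge}) a consistent $\carr$-charge, hence an embedding of $|\tilde\qpol|$ with isoradial cycles, and (by Theorem \ref{zigzag}) condition Z. Because $\chi_\qpol=0$ the cover is contractible, so any two directed paths $v\to w$ are homotopic; by Lemma \ref{homotop} they then differ only by a central power of $\ell$, and since $\deg_{\tilde\cP}\ell=1$ for every perfect matching $\tilde\cP$ (a matching meets each cycle of $Q_2$ exactly once), a path not meeting $\tilde\cP$ is exactly a path of minimal, i.e. $0$, $\tilde\cP$-degree in this single homotopy class. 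Thus the task reduces to choosing, for the given displacement $v\to w$, a matching achieving degree $0$.

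The heart of the argument, and the step I expect to be the main obstacle, is the explicit construction of such a matching from the isoradial data. I would use the extremal matchings $\cP_\theta^{\pm}$, whose arrows are pinned down relative to the zigzag rays by Lemma \ref{PMZZ}. The idea is to pick $\theta$ in a gap between two consecutive zigzag directions $e_\pZ$, chosen so that the complementary region cut out by $\cP_\theta^{\pm}$ contains the direction of $\vec{vw}$, and then to route a directed path from $v$ to $w$ through arrows lying outside $\cP_\theta^{\pm}$. The preceding lemma on angles between zigzag paths guarantees that the zigzag directions are spread over the whole circle, so that a suitable extremal $\theta$ is available for any displacement; and Lemma \ref{opp} rules out the degenerate obstruction, since a path forced to cross $\cP_\theta^{\pm}$ however it is routed would yield two cyclic paths of opposite homology class both avoiding $\cP_\theta^{\pm}$, which is impossible. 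Verifying that the chosen matching genuinely admits a degree-$0$ path all the way to $w$, rather than merely into the correct half-plane, is the delicate point; once it is settled, the Broomhead Lemma closes the proof.
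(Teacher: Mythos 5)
Your forward direction is fine and in fact more explicit than the paper's (which simply observes that $B_\qpol$ is a subalgebra of the cancellation algebra $\Mat_{|Q_0|}(\C[M])$). Your reduction of the backward direction to Broomhead's lemma, and the decision to work with the extremal matchings $\cP_\theta^{\pm}$ attached to an isoradial embedding, are exactly the right framework and match the paper's strategy. But the step you yourself flag as ``the delicate point'' --- actually producing, for an arbitrary target vertex $w$, a directed path from $v$ to $w$ of $\cP$-degree zero rather than merely a path heading into the correct half-plane --- is precisely where all the content of the theorem lives, and your proposal does not contain an argument for it. Choosing $\theta$ in a gap between consecutive zigzag directions and ``routing a path through arrows lying outside $\cP_\theta^{\pm}$'' is a statement of the goal, not a proof; nothing you have written rules out the degree-zero reachable set from $v$ being, say, a thin strip that misses $w$. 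Your appeal to Lemma \ref{opp} also misfires: in the paper that lemma is used only to show that a certain ray is antiparallel rather than parallel to a zigzag path, not to exclude an obstruction to routing.

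The paper closes this gap with three ingredients you would need to supply. First, a counting argument at each vertex: using consistency condition R2, the arrows at a vertex $v$, translated to the unit circle, form a closed path winding $n-1$ times, so not all incoming (resp.\ outgoing) arrows can cross the direction $\theta_i$; hence every vertex has an incoming and an outgoing arrow avoiding $\cP_{i-1}\cup\cP_i$, which concatenate into an infinite ray $\cY_i^v$ avoiding \emph{two} consecutive matchings (not one, as in your sketch). Second, these rays for consecutive $i$ cut the plane into sectors, and Lemma \ref{angle} guarantees the relevant rays based at far-away points $u_1$, $u_2$ actually cross, so that any $w$ in a sector is enclosed in a compact region bounded by two paths from $v$ that both avoid $\cP_i$. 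Third --- and this is the idea your proposal is missing entirely --- those two boundary paths are homotopic with equal $\cP_i$-degree, hence equal in $A_\qpol$ by Lemma \ref{homotop}; the finite sequence of relation-substitutions transforming one into the other sweeps across the enclosed region and therefore passes through $w$ at some intermediate stage, yielding the desired path $v\to w$ avoiding $\cP_i$. Without this sweeping argument (or a substitute for it) the proof is incomplete.
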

\begin{proof}
Note that algebraic consistency automatically implies cancellation as $B_\qpol$ is a subalgebra of the cancellation algebra
$\Mat_n(\C[M])$.

Suppose that $\qpol$ is cancellation and let $0\ge \theta_1>\dots> \theta_u> 2\pi$ be the directions of the zigzag paths. 
Use these directions to construct an $\carr$-charge as in theorem \ref{rcharge} and its corresponding embedding with isoradial cycles.
For each $i \in \{1,\dots, u\}$ we define $\cP_i:= \cP_{\theta_i}^+$ (note that by the isoradial construction $\theta_i=\eps_{\pZ_i}$).

Every vertex in the universal cover has an arriving and a leaving arrow not in $\cP_{{i-1}} \cup \cP_{i}$.
Indeed if we look at the arrows in a vertex $v$ then by remark \ref{goodtiling} 
every arrow is a vector from $e_{\pZ_a^-}$ to $e_{\pZ_a^+}$, so the tail of each arriving arrow $a$
and the head of the leaving arrow $b$ in the same negative cycle are both in the same direction viewed from their
positive cycles. So if we shift all arrows arriving in and leaving from $v$ to the unit circle they will form a path
\begin{center}
\begin{tikzpicture}
\draw [->, very thick] ($(0:1cm)-(120:1cm)$) -- (0,0);
\draw [->, very thick] (0,0) --  ($(210:1cm)-(120:1cm)$); 
\draw [->, very thick] ($(210:1cm)-(330:1cm)$) -- (0,0); 
\draw [->, very thick] (0,0) -- ($(90:1cm)-(330:1cm)$); 
\draw [->, very thick]  ($(90:1cm)-(225:1cm)$) -- (0,0); 
\draw [->, very thick]  (0,0)-- ($(360:1cm)-(225:1cm)$); 
\draw ($(0:1cm)-(120:1cm)$) arc (0:210:1);
\draw ($(210:1cm)-(330:1cm)$) arc (-150:90:1);
\draw ($(90:1cm)-(225:1cm)$) arc (90:360:1);
\end{tikzpicture}
\hspace{1cm}
\begin{tikzpicture}
\draw [->, very thick] (0:1cm) --(120:1cm) -- (210:1cm) -- (330:1cm) -- (90:1cm) -- (225:1cm) -- (360:1cm);
\draw [->] (0:1cm) arc (0:360:1cm);
\end{tikzpicture}
\end{center}

The 2nd consistency condition implies that the path goes around
the unit circle $n-1$ times where $n$ is the number of incoming arrows. 
\[
\sum_{h(a)=v}(1-\carr_a) + \sum_{t(a)=v}(1-\carr_a)=2 \implies \sum_{h(a)=v}\carr_a + \sum_{t(a)=v}\carr_a=2n -2 
\]
An arrow sits in $\cP_{{i-1}} \cup \cP_{i}$ if and only if its head, tail or body crosses
the direction $\theta_i$. If all incoming arrows would cross $\theta_i$, the path would go round $n$ times which is a contradiction. The same can be said about the leaving arrows.

This means there is a path from every vertex $v$ that does not meet $\cP_{{i-1}} \cup \cP_{i}$ and hence does not intersects the zigzag path $\pZ_i$. It also does not selfintersect because it does not meet $\cP_{i}$.
Therefore it must either be parallel or antiparallel to the zigzag path. Parallel is impossible because of lemma \ref{opp} and the existence of a path in the opposite direction of the zigzag path. 

Let us call this ray $\cY_i^v$. If $\cY_i^v$ and $\cY_{i+1}^v$ intersect multiple times we know that the pieces between the intersections are equivalent
because they both do not meet $\cP_{i}$. Hence they also both do not meet $\cP_{i\pm 1}$. We can chose $\cY_i^v$ and $\cY_{i+1}^v$ to overlap
on that piece. Choosing the $\cY_i^v$ this way, we can divide the plane into sectors lying between the $\cY_i^v$ and $\cY_{i+1}^v$.

Now let $w$ be another vertex in the universal cover. If it lies on one of the rays $\cY_i$ then there is a path from $v$ to $w$ that does not meet $\cP_{i}$.
If it lies between $\cY_i^v$ and $\cY_{i+1}^v$ we can find a vertex $u_1$ far enough on $\cY_i^v$ and $u_2$ far enough on $\cY_{i+1}^v$ such that $w$ lies in the piece 
cut out by $\cY_i^v$,  $\cY_{i+1}^{u_1}$, $\cY_i^{u_2}$ and  $\cY_{i+1}^{v}$. Note that the middle two paths intersect
because by lemma \ref{angle} the angle in the original embedding between them is smaller than $\pi$.
\[
\xymatrix@=.3cm{
&&&&&&&&\\
&&\vtx{u_2}\ar@{.>}[rrrr]^{\cY_{i+1}^{u_2}}&&&&&&\\
&&\vtx{w}&&&&&&\\
\vtx{v}\ar@{.>}[uuurrr]^{\cY_{i}^v}\ar@{.>}[rrrrrr]_{\cY_{i+1}^v}&&&\vtx{u_1}\ar@{.>}[uuurrr]_{\cY_{i}^{u_1}}&&&&&
}
\]

The piece is bounded by two paths that do not meet $\cP_{i}$ so they have the same homology and $\deg_{\cP_i}$ and by lemma \ref{homotop} they are equivalent.
Hence, there is a sequence of relations turning the first path in to the second. One of the intermediate steps must meet $w$ because it is inside the piece.
This will give us a path from $v$ to $w$ that does not meet $\cP_{i}$.
\end{proof}
\begin{remark}
The idea of cutting out a piece bounded by paths that do not meet a certain perfect matching is borrowed
from section 6.3.1 in \cite{broomhead}. In order to make this work in the marginal consistent case, we used the new notion
of these $\cP_\theta$ which do not appear in \cite{broomhead}. 
\end{remark}

\section{Orders}

\begin{definition}
An order $A$ is a prime algebra (i.e. the product of nonzero ideals is nonzero) 
which is a finitely generated module over its center $R$.
If $K$ is the quotient field of $R$ and $\Delta=A\otimes_R K$ then we say $A$ is an $R$-order in $\Delta$.  
\end{definition}

Orders have a special property: Reichstein and Vonessen \cite{reichstein} showed they can be reconstructed from a certain representation space.
Suppose $A$ can be written as a path algebra with relations $\C Q/\cI$. 

For any dimension vector $\alpha$ we can define $\rep(Q,\alpha)$ as
\[
\rep(Q,\alpha) := \bigoplus_{a \in Q_1}\Mat_{\alpha_{h(a)}\times \alpha_{t(a)}}(\C).
\]
This space parametrizes the $\alpha$-dimensional representations of $Q$.

On this space we have a base change action of the group $\GL_\alpha=\prod_{v\in Q_0}\GL_{\alpha_v}(\C)$.
This group also acts on $\Mat_{|\alpha|}(\C)$ by base change and we define $\Eqv(Q,\alpha)$ as the ring of equivariant
polynomial maps from $\rep(Q,\alpha)$ to $\Mat_{|\alpha|}(\C)$
\[
\Eqv(Q,\alpha) := \{f: \rep(Q,\alpha) \to \Mat_{|\alpha|}(\C)|\forall g \in \GL_\alpha: f(\rho^g) = f(\rho)^g\}
\]
The multiplication in this ring comes from $\Mat_{|\alpha|}(\C)$.

For $A$ we define the $\simples(A,\alpha)$ as the subset of $\rep(Q,\alpha)$ containing
all representations of $Q$ that are simple representation of $A$.

\begin{theorem}[Reichstein, Vonessen]\label{RV}
If $A$ is an order then there is an $\alpha$ such that 
\[
A \cong \frac{\Eqv(Q,\alpha)}{\{f: f(\simples(A,\alpha))=0\}}
\]
\end{theorem}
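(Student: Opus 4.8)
The plan is to realize the claimed isomorphism as the natural evaluation map and to prove it bijective using the primeness of $A$ together with invariant theory for $\GL_\alpha$-equivariant matrix-valued functions. First I would fix the dimension vector. Since $A$ is an order in the central simple $K$-algebra $\Delta=A\otimes_R K$, let $n$ be its PI-degree, so that over the generic point $\Delta$ becomes a matrix algebra and $A$ acquires a dense family of simple representations of some fixed dimension vector $\alpha$ with $|\alpha|=n$; this is the $\alpha$ for which a generic point of $\simples(A,\alpha)$ is an honest simple $A$-module.

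Next I would construct the map. For $a\in A$ and $\rho\in\rep(Q,\alpha)$ set $\Phi(a)(\rho):=\rho(a)$, sending a path to the product of the matrices along it; this is manifestly $\GL_\alpha$-equivariant, so $\Phi(a)\in\Eqv(Q,\alpha)$. Although the relations $\cI$ need not hold on all of $\rep(Q,\alpha)$, they do hold on $\simples(A,\alpha)$ because its points are genuine $A$-representations; hence two representatives of the same class $a\in A=\C Q/\cI$ give equivariant maps that differ by a map vanishing on $\simples(A,\alpha)$. Thus $\Phi$ descends to a well-defined homomorphism $\bar\Phi\colon A\to\Eqv(Q,\alpha)/\{f:f(\simples(A,\alpha))=0\}$, and it remains to show $\bar\Phi$ is injective and surjective.

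For injectivity, suppose $\rho(a)=0$ for every simple $\rho$. Because $A$ is prime and module-finite over its center, the Artin--Procesi theory shows that the Azumaya (equivalently simple) locus is dense and that a nonzero element of $A$ acts nontrivially on some simple representation of PI-degree $n$; hence $a=0$. For surjectivity I would invoke the first fundamental theorem for equivariant maps of quiver representations (Le Bruyn--Procesi): every element of $\Eqv(Q,\alpha)$ is a $\C$-linear combination of maps of the form $\rho\mapsto t(\rho)\,\rho(p)$, where $p$ is a path and $t(\rho)$ is a polynomial in traces of cyclic paths. Restricting to $\simples(A,\alpha)$, each trace coefficient $t$ is a $\GL_\alpha$-invariant regular function, hence a function of the central character; using that $A$ is an order, the reduced trace $\operatorname{trd}\colon\Delta\to K$ identifies these invariants with values of central elements $z\in R\subset A$ on the simple locus, so that $t(\rho)\,\rho(p)=\rho(z\,p)$. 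Consequently every equivariant map agrees on $\simples(A,\alpha)$ with $\Phi$ of an element of $A$, which gives surjectivity.

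The main obstacle is this surjectivity step, and specifically the absorption of the invariant trace coefficients into $A$. This is exactly where the hypothesis that $A$ is an order, rather than an arbitrary path algebra with relations, becomes indispensable: one needs the reduced trace to land in the center, and one needs the Artin--Procesi characterization of the Azumaya locus to guarantee that the ring of invariant functions on the simple locus is generated by reduced traces of elements of $A$. Once these structural facts are in place, the remaining verifications --- equivariance, compatibility with relations, and the matching of homogeneous degrees --- are routine. This is the content of the Reichstein--Vonessen reconstruction, and the argument above is the shape I would follow to reprove it in the present setting.
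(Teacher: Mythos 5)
First, a point of reference: the paper does not prove this statement at all. Theorem \ref{RV} is imported from Reichstein and Vonessen \cite{reichstein}, and the only original content surrounding it is the remark translating their PI-ring formulation into the language of quivers and dimension vectors (and the subsequent lemma identifying $B_\qpol$ with the right-hand side for $\alpha=(1,\dots,1)$). So there is no in-paper proof to compare yours against; the comparison has to be with the cited source.

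As a reconstruction of the Reichstein--Vonessen argument, your sketch has the right architecture: the evaluation map $\Phi(a)(\rho)=\rho(a)$, well-definedness modulo $\cI$ because the relations hold on $\simples(A,\alpha)$, injectivity from primeness of $A$ together with density of the Azumaya locus, and surjectivity via the Le Bruyn--Procesi description of $\Eqv(Q,\alpha)$ as generated by path-evaluations over the trace invariants. The genuine gap is exactly at the step you yourself flag as the main obstacle, and your proposed resolution does not close it. You absorb an invariant trace coefficient $t$ into $A$ by asserting that the reduced trace identifies it with a central element $z\in R\subset A$. But for an arbitrary order the reduced trace of an element of $A$ is only guaranteed to lie in the integral closure of $R$ in $K$ (it is an element of $K$ integral over $R$); it need not lie in $R=Z(A)$, and an order need not contain its trace ring. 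If trace coefficients could always be absorbed in this way, the construction would reconstruct the trace ring $A[\operatorname{trd}(A)]$ rather than $A$ itself, and separating these two is precisely the delicate content of the Reichstein--Vonessen theorem: their surjectivity argument works inside $\Delta=A\otimes_R K$, clears central denominators, and exploits that the equivariant maps in question are globally polynomial and are only being remembered on the irreducible locus --- it does not proceed by pushing traces into the center. So the decisive step of your surjectivity argument is assumed rather than proved; the remaining ingredients (choice of $\alpha$ with $|\alpha|$ equal to the PI-degree, equivariance, the Artin--Procesi input for injectivity) are fine at the level of a sketch.
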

\begin{remark}
The original version of the theorem uses the terminology of PI-rings which is a bit broader than orders.
We also reformulated the theorem in the language of quivers whereas the original works with
generators of an algebra. The dimension vector is such that the PI-degree of $A$ is $|\alpha|$, this is the biggest $\alpha$ for which
there exist simples or equivalently the dimension vector of a simple representation of the form $\rho:A \to A\otimes_R R/\ideal m$ where
$\ideal m\lhd R$ is a maximal ideal. 
\end{remark}

In the case of dimer models we already had a notion of reconstructing the algebra $A$ using
the algebraic consistency. Algebraic consistency fits in this more general framework because of the following lemma
\begin{lemma}
If $\qpol$ is a dimer model then
$B_\qpol \cong \frac{\Eqv(Q,\alpha)}{\{f: f(\simples(A_{\qpol},\alpha))=0\}}$
with $\alpha$ the dimension vector that assigns $1$ to each vertex.
\end{lemma}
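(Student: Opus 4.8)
The plan is to make both sides of the claimed isomorphism completely explicit and to realize them as source and image of one natural surjection, whose kernel I then identify with the vanishing ideal of the simples. First I would compute $\Eqv(Q,\alpha)$ for $\alpha=(1,\dots,1)$. Here $\rep(Q,\alpha)=\C^{Q_1}$ has coordinate ring the polynomial ring $\C[x_a\mid a\in Q_1]$, the torus $\GL_\alpha=(\C^*)^{Q_0}$ scales $x_a$ by the character $h(a)-t(a)=d(a)$, and it acts on $\Mat_{|Q_0|}(\C)=\bigoplus_{i,j}\C E_{ij}$ by conjugation, so that $E_{ij}$ carries the character $i-j$. An equivariant map is therefore a matrix whose $(i,j)$-entry is a sum of monomials $\prod_a x_a^{m_a}$ of weight $d(m)=i-j$; recognizing this as the condition $m\in d^{-1}(i-j)\cap\N^{Q_1}$ gives
\[
\Eqv(Q,\alpha)=\bigoplus_{i,j\in Q_0}\Span\bigl(d^{-1}(i-j)\cap\N^{Q_1}\bigr)\subset\Mat_{|Q_0|}(\C[x_a]),
\]
which is exactly the combinatorial recipe defining $B_\qpol$, but with the polynomial ring in place of $\C[M]$.

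Second I would use the quotient $\Z^{Q_1}\to M=\Z^{Q_1}/de^{-1}(0)$. It induces a ring map $\C[x_a]\to\C[M]$, hence a homomorphism $\pi$ of matrix algebras carrying the block $\Span(d^{-1}(i-j)\cap\N^{Q_1})$ onto $\Span(M^+_{ij})$. Thus $\pi\colon\Eqv(Q,\alpha)\twoheadrightarrow B_\qpol$ is surjective, and its kernel is spanned, block by block, by the binomials $x^{m}-x^{m'}$ with $m\equiv m'$ modulo $de^{-1}(0)$. Everything then reduces to identifying $\ker\pi$ with $I=\{f:f(\simples(A_\qpol,\alpha))=0\}$.

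Third, the comparison of the two ideals. Let $Z^{\circ}\subset\rep(Q,\alpha)$ be the locus where every $x_a$ is nonzero and the Jacobi relations hold. Since $Q$ is strongly connected, each point of $Z^{\circ}$ has full support and is a simple $A_\qpol$-representation, so $Z^{\circ}\subseteq\simples$. On the other hand $Z^{\circ}$ is precisely the image of $\Hom(M,\C^*)$ under $\chi\mapsto(\chi([a]))_a$: a character factors through $M$ exactly when it respects the relations $\prod_{a\in c}x_a=\prod_{a\in c'}x_a$, and these equalities are generated by the Jacobi relations (here I use the connectivity built into the manifold condition DM). Hence the coordinate ring of $Z^{\circ}$ is $\C[M]$, and because distinct elements of $M$ are linearly independent characters, $\pi(f)$ is nothing but the restriction $f|_{Z^{\circ}}$. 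Therefore $\ker\pi=\{f:f|_{Z^{\circ}}=0\}$, and feeding in $Z^{\circ}\subseteq\simples$ yields $I\subseteq\ker\pi$ for free.

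For the reverse inclusion $\ker\pi\subseteq I$ I must show that a function vanishing on $Z^{\circ}$ already vanishes on every simple, i.e. $\simples\subseteq\overline{Z^{\circ}}$; this is the step I expect to be the main obstacle. A simple with $\rho^{d(c)}\ne 0$ has all arrows invertible and lies in $Z^{\circ}$ outright, so the difficulty is confined to simples with $\rho^{d(c)}=0$, which may be supported on a proper strongly connected spanning subquiver and which one must still exhibit as degenerations of points of $Z^{\circ}$. I would attack this through the binomial structure of the relation ideal: $\overline{Z^{\circ}}$ is its component meeting the big torus, equal to $\Spec\C[N]$ for the submonoid $N\subset M$ generated by the $[a]$, and I would argue, using the torus/surface geometry of the dimer model, that the support of any simple cuts out a face of $N$, so that the simple arises as a limit of characters in $\Hom(M,\C^*)$. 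Once $\overline{\simples}=\overline{Z^{\circ}}$ is established we get $\ker\pi=I$, and $\pi$ descends to the asserted isomorphism $B_\qpol\cong\Eqv(Q,\alpha)/I$.
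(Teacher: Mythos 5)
Your first three steps reproduce the paper's argument, in somewhat more detail than the paper itself supplies: the computation of $\Eqv(Q,\alpha)=\bigoplus_{i,j}\Span(d^{-1}(i-j)\cap\N^{Q_1})$ inside $\Mat_{|Q_0|}(\C[x_a])$, the surjection $\pi$ onto $B_\qpol$ whose kernel is spanned by the binomials $x^{m}-x^{m'}$ with $m-m'\in de^{-1}(0)$, and the inclusion $I\subseteq\ker\pi$ obtained by restricting to the locus $Z^{\circ}$ of representations with all arrows nonzero (which are simple by strong connectedness, and on which the relations amount to all cycles of $Q_2$ taking a common value) together with linear independence of the characters of $\Hom(M,\C^*)$. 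This is exactly what the paper's proof establishes: two monomials that agree on all simples agree on $Z^{\circ}$ and hence differ by an element of $de^{-1}(0)$.

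The gap is the one you flag yourself: the reverse inclusion $\ker\pi\subseteq I$ is left as a plan rather than a proof. You must show that $x^{m}-x^{m'}$ with $[m]=[m']$ in $M$ vanishes on \emph{every} simple, and your proposed route --- proving $\simples\subseteq\overline{Z^{\circ}}$ by arguing that the support of any simple cuts out a face of the monoid $N$ --- is not carried out, and it is not obviously available for an arbitrary (possibly non-cancellation) dimer model, which is the generality in which the lemma is stated. Your own observations already localize the difficulty precisely: writing $m+d(v)=m'+d(u)$ with $u,v\in\N^{Q_2}$ and $e(u)=e(v)=N$, and using that on any $A_\qpol$-representation of dimension vector $(1,\dots,1)$ all cycles of $Q_2$ evaluate to the common scalar $\lambda=\rho^{\ell}$, one gets $x^{m}(\rho)\,\lambda^{N}=x^{m'}(\rho)\,\lambda^{N}$, so the binomial vanishes on every simple with $\rho^{\ell}\neq 0$; the whole problem therefore sits on the simples annihilated by $\ell$, i.e.\ those whose support is a proper strongly connected subquiver missing at least one arrow of every cycle in $Q_2$, and for these one must still rule out $x^{m}(\rho)\neq 0=x^{m'}(\rho)$. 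For what it is worth, the paper's own proof is silent on exactly this point --- it proves $I\subseteq\ker\pi$ and then asserts the displayed equality --- so you have not overlooked an argument that the paper supplies; but as a self-contained proof your proposal is incomplete until this last inclusion is actually established.
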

\begin{proof}
One can easily check that using the terminology of the previous section
\[
\Eqv(Q,\alpha) = \bigoplus_{i,j}\Span(d^{-1}(i-j)\cap \N^{Q_1}) \subset \Mat(\C[\Z^{Q_1}]).
 \]
If two monomials of $x^{\alpha},x^{\beta} \in \C[\Z^{Q_1}]$ evaluate the same on all simples,
then they evaluate the same on all representations for which all arrows are nonzero.
This implies that $\alpha-\beta \in de^{-1}(0)$ because such a simple representation is
a representation of $A_\qpol$ if and only if all cycles evaluate to the same number.
Therefore
\[
\frac {\Span(d^{-1}(i-j)\cap \N^{Q_1})}{\{f: f(\simples(A_{\qpol},\alpha))=0\}} = \Span(\frac{d^{-1}(i-j)\cap \N^{Q_1}}{de^{-1}(0)})
\]  
\end{proof}
So algebraic consistency seems just a specific consequence of being an order and indeed we have the following theorem.

\begin{theorem}
A Jacobi algebra of a dimer model on a torus is an order if and only if
it is algebraicly consistent. 
\end{theorem}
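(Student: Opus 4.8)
The plan is to reduce both implications to tools already established: since we are on a torus we have $\chi_\qpol=0$, so Theorem \ref{algcon} is available and identifies algebraic consistency with the cancellation property, while the preceding lemma identifies $B_\qpol$ with $\Eqv(Q,\alpha)/\{f:f(\simples(A_\qpol,\alpha))=0\}$ for $\alpha=(1,\dots,1)$. With these in hand the theorem splits into a short ``formal'' direction and a ``structural'' direction.

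For \emph{order $\implies$ algebraically consistent} I would exploit that an order is by definition prime, and that $\ell$ is a nonzero central element of a prime ring and hence a nonzerodivisor: if $\ell a=0$ with $a\neq 0$, then $(\ell A)(AaA)=A\ell a A=0$ while $\ell A=A\ell$ and $AaA$ are nonzero two-sided ideals, contradicting primeness. Consequently every power $\ell^{k}$ is regular, so the Ore localization map $A_\qpol\to\hat A_\qpol=A_\qpol[\ell^{-1}]$ is injective, which is exactly the reformulation of cancellation given in Section 3. Theorem \ref{algcon} then promotes cancellation to algebraic consistency. (One could instead route this through Theorem \ref{RV}, reconstructing $A_\qpol$ as $\Eqv(Q,\alpha)/\{\dots\}$ once one checks the generic simples have dimension vector $(1,\dots,1)$ and matching this with the lemma via $\tau$; but the regular-element argument is cleaner and avoids computing the PI-degree.)

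For the converse \emph{algebraically consistent $\implies$ order} I would work directly with $B_\qpol$, since $\tau\colon A_\qpol\xrightarrow{\sim}B_\qpol\subseteq\Mat_{n}(\C[M])$ with $n=|Q_0|$ and $\C[M]$ a commutative domain (here $M$ is free abelian, which is where the toric hypothesis enters). The algebra $B_\qpol$ has the $\C$-basis of monomial matrix units $x^{m}e_{ij}$ with $m\in M^+_{ij}$, and the product of two basis elements is either $0$ or again a basis element $x^{m+m'}e_{il}$, never zero since $\C[M]$ is a domain. Fixing a total order on $M$ induced by a generic linear functional, a leading-term computation shows $B_\qpol$ is prime: given nonzero $x\in e_iB_\qpol e_j$ and $y\in e_kB_\qpol e_l$, pick any $\beta\in M^+_{jk}$ (nonempty by strong connectivity, as a path from $k$ to $j$ produces an element of $d^{-1}(j-k)\cap\N^{Q_1}$) and set $b=x^{\beta}e_{jk}$; the top term of $xby$ is the product of the top terms of $x$ and $y$, which cannot cancel, so $xby\neq 0$.

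It then remains to prove module-finiteness over the center, and this is the step I expect to be the main obstacle. The plan is to let $R$ be the affine semigroup algebra generated by the loop classes (the coordinate ring of the underlying toric singularity); it lies in $Z(B_\qpol)$ and is Noetherian, and each $M^+_{ij}$ is the set of lattice points of a shifted rational polyhedral cone, hence a finitely generated $R$-module by Gordan's lemma. Summing over the finitely many pairs $(i,j)$ shows $B_\qpol$ is a finite $R$-module, a fortiori a finite $Z(B_\qpol)$-module, so $B_\qpol$ is prime and module-finite over its Noetherian center, i.e.\ an order; transporting along $\tau$ gives the same for $A_\qpol$. The delicate points to nail down are that $M$ is torsion-free (so that $\C[M]$ is genuinely a domain carrying a total order) and the precise toric bookkeeping making the Gordan finiteness rigorous; by contrast the primeness and the order-to-cancellation implication are essentially formal.
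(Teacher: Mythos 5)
Your proposal follows essentially the same route as the paper's proof: order $\Rightarrow$ prime $\Rightarrow$ $\ell$ regular $\Rightarrow$ cancellation $\Rightarrow$ algebraic consistency via Theorem \ref{algcon}, and conversely algebraic consistency $\Rightarrow$ order because $B_\qpol$ is prime and module-finite over its center. The paper compresses the latter facts into ``$B_\qpol$ by construction is prime and finite over its center,'' so your leading-term primeness argument, the Gordan's-lemma finiteness, and the flagged torsion-freeness of $M$ are exactly the details it leaves implicit rather than a different strategy.
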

\begin{proof} 
An algebraicly consistent dimer model is always an order because $B_\qpol$ by construction
is prime and finite over its center.  If $A_\qpol$ is an order then it is cancellation because it is prime. Theorem \ref{algcon} implies it is algebraicly consistent. 
\end{proof}

\begin{remark}
A dimer model on a higher genus surface can never be an order, because
if it is prime, it must be cancellation, but then $\hat A_\qpol$ must also be finite over its center but this is impossible
because $\hat A_\qpol/(\ell-1)$ is Morita equivalent to the group algebra of a higher genus surface group.

A dimer model on a sphere is an order if and only if it is cancellation. Indeed if it is cancellation
and we are on a sphere then every path $p$ is isomorphic to a $\ell^k p'$ where $p'$ does not self intersect. 
There are only a finite number of paths that do not self intersect so $A_\qpol$ is finite over its center.
On the other hand, if $A_\qpol$ is an order then it is cancellation because it is prime.
Such dimer models are also algebraicly consistent: the representation $A_\qpol \to A_\qpol\otimes_{\C[\ell]}\C[\ell]/(\ell -1)=\Mat_{|Q_0|}(\C)$ so
the $\alpha$ from theorem \ref{RV} is the dimension vector that assigns $1$ to each vertex.
\end{remark}

\section{Noncommutative crepant resolutions}

In \cite{nccrep} Van den Bergh introduced the notion of a noncommutative crepant resolution.
\begin{definition}
Let $R$ be an affine commutative Gorenstein domain, with quotient field $K$. An algebra $A$ is a \emph{noncommutative crepant resolution}
of $R$ if $A$ is homologically homogeneous (i.e. the projective dimension of all simple representations of $A$ is the same)
and $A \cong \End_R(M)$ for some finitely generated $R$-reflexive module $M$  (reflexive means $\Hom_R(\Hom_R(M,R),R)\cong M$).
\end{definition}

As is explained in the discussion following this definition in and using results \cite{nccrep} from \cite{lebruynnote} and \cite{orzech}, this definition is satisfied if
\begin{enumerate}
 \item $A$ is an  $R$-order in $\Mat_{n\times n}(K)$,
 \item $A$ has finite global dimension,
 \item $A$ is Cohen-Macaulay over $R$,
 \item the ramification locus has codimension $\ge 2$.
\end{enumerate}
The ramification locus of an order is defined as the set of points $\ideal p \in \mathtt{Mspec} R$ such 
that $A \otimes_R R/\ideal p \ne \Mat_{n \times n}(\C)$ (or in other words the representation of $A$ at the point $p$ is not simple). 

\begin{theorem}
The Jacobi algebra of a dimer model on a torus is a noncommutative crepant resolution of its center if and only if it is cancellation.
\end{theorem}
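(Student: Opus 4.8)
The plan is to prove the two implications separately, organizing everything around Van den Bergh's sufficient criterion for an NCCR recalled just above the statement, namely the four conditions that $A_\qpol$ be (1) an $R$-order in a matrix algebra, (2) of finite global dimension, (3) Cohen--Macaulay over $R$, and (4) ramified only in codimension $\ge 2$. The implication \textbf{NCCR $\Rightarrow$ cancellation} is the easy one: an NCCR $A=\End_R(M)$ is module-finite over its center $R$ and is an order in the central simple algebra $A\otimes_R K\cong\Mat_n(K)$, hence prime; and as already observed in the section on orders, a prime Jacobi algebra of a dimer model is cancellation. So the substance lies in the converse, and the whole effort is to verify (1)--(4) under the hypothesis that $A_\qpol$ is cancellation.

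For condition (1) I would invoke the results already proved here: by Theorem~\ref{algcon} cancellation is equivalent to algebraic consistency, and by the order theorem of this section an algebraically consistent dimer model on a torus is an order, with the explicit identification $A_\qpol\cong B_\qpol\subset\Mat_{|Q_0|}(\C[M])$. Since the generic representation of dimension vector $(1,\dots,1)$ is simple as soon as $\ell\neq 0$, this exhibits $A_\qpol$ as generically Azumaya of degree $n=|Q_0|$, so that $\Delta=A_\qpol\otimes_R K\cong\Mat_n(K)$ splits; this is exactly (1). For condition (2) I would simply quote Davison's Theorem~\ref{canc}: cancellation implies that $A_\qpol$ is CY-3, and a CY-3 algebra has global dimension $3$, in particular finite global dimension.

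The remaining conditions (3) and (4) are the technical heart, and both are statements about the behaviour of $A_\qpol$ along the hypersurface $V(\ell)\subset\Spec R$. Here $R$ is the coordinate ring of a toric Gorenstein threefold, hence Gorenstein (so Cohen--Macaulay) of dimension $3$. For (3) I would extract Cohen--Macaulayness of $A_\qpol$ as an $R$-module from the self-dual length-$3$ bimodule resolution $\cP^\bullet$ furnished by the CY-3 property: restricting this symmetric resolution to the center and using that $R$ is Gorenstein of dimension $3$ forces $\mathrm{depth}_R A_\qpol=3$, i.e. $A_\qpol$ is maximal Cohen--Macaulay. For (4) I would first note that the ramification locus is contained in $V(\ell)$, since $\hat A_\qpol=A_\qpol[\ell^{-1}]$ is Azumaya of degree $n$ over its center; it then remains to show that at the generic point of each component of $V(\ell)$ the fibre is still a simple $n$-dimensional representation. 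The components of $V(\ell)$ correspond to the extremal perfect matchings, i.e. to the boundary lattice points of the toric diagram, and for each such divisor one uses the associated matching $\cP_\theta^\pm$ to produce a representation with all non-matching arrows generic that is still simple, so the divisor avoids the ramification locus; the reflexive Azumaya formalism provides the bookkeeping. With (1)--(4) in hand, Van den Bergh's criterion yields that $A_\qpol$ is an NCCR of $R$.

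The main obstacle is precisely this analysis along $\ell=0$: conditions (3) and (4) cannot be read off formally from cancellation and must be wrung, respectively, out of the self-dual CY-3 resolution and out of a careful toric/perfect-matching argument controlling which representations stay simple as $\ell\to 0$. Everything else reduces to results already established in the paper.
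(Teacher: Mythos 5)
Your overall architecture coincides with the paper's: the easy direction goes through ``NCCR $\Rightarrow$ order $\Rightarrow$ prime $\Rightarrow$ cancellation,'' and the converse is organized around Van den Bergh's four sufficient conditions. Conditions (1) and (2) are handled exactly as in the paper (order via the preceding sections, with $A_\qpol\otimes_R K\cong\Mat_n(K)$ coming from $\hat A_\qpol\cong\Mat_n(\C[X_1^{\pm1},X_2^{\pm1},X_3^{\pm1}])$; finite global dimension from Davison's CY-3 theorem). For condition (3) the paper simply cites \cite{StaffordVDB}[Theorem 2.2]; your sketch --- ``restricting the self-dual resolution to the center forces $\mathrm{depth}_R A_\qpol=3$'' --- gestures at the idea behind that theorem but is not an argument as written (a self-dual \emph{bimodule} resolution does not restrict in any obvious way to an $R$-module resolution computing depth), so you should either cite the Stafford--Van den Bergh result or actually carry out the local-duality computation.

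The genuine divergence, and the genuine gap, is in condition (4). The paper's argument is short and self-contained: a ramified point must kill $\ell$ (else all arrows are invertible and the fibre is simple), and it must also kill some cycle $c$ with nonzero homology class (else two cycles with linearly independent homology classes are nonzero, they intersect, and one gets paths of nonzero arrows between any two vertices, forcing simplicity); since $\ell$ and $\Tr c$ have different homology they cut out a codimension-2 subscheme containing the ramification locus. Your proposed route instead identifies the components of $V(\ell)$ with the extremal perfect matchings/boundary lattice points of the toric diagram and exhibits a simple fibre at each generic point. That identification is a nontrivial piece of toric geometry that is neither proved nor cited in the paper (and is itself usually derived \emph{from} consistency), so as stated it is circularly placed. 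Moreover, ``all non-matching arrows generic'' does not automatically define a representation of $A_\qpol$: for $a\in\cP$ the relation $r_a=p^+-p^-$ involves two paths containing no arrow of $\cP$, and their equality must be arranged, not assumed. Simplicity of such a fibre would then still need the path-avoidance statement from the algebraic-consistency section (for every pair of vertices a path missing $\cP$). None of these steps is unfixable, but as proposed the verification of (4) is incomplete, whereas the paper's two-line codimension count requires none of this machinery.
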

\begin{proof}
If $A_\qpol$ is a noncommutative crepant resolution of its center then it is an order and hence cancellation.

Suppose that $A_\qpol$ is cancellation, then $A_\qpol$ is an order.
Because $A_\qpol\otimes_{\C[\ell]} \C[\ell,\ell^{-1}]=\Mat_n(\C[X_1^{\pm 1},X_2^{\pm 1},X_3^{\pm 1}])$ 
and  $K=\C(X_1,X_2,X_3)$ we have that $A_{\qpol} \otimes_R K = \Mat_{n\times n}(K)$.
By the CY-3 property the global dimension is $3$.
From \cite{StaffordVDB}[theorem 2.2] we conclude that $A$ is Cohen-Macaulay.

Finally, if we show that the ramification locus has codimension at least $2$, we are done.
If $\ideal p\lhd Z(A)$ lies in the ramification locus then $\ell$ must be in $\ideal p$, because
otherwise all arrows must evaluate to something nonzero and the representation is simple.

Now we show that there is at least one cycle with nonzero homology class that evaluates to zero:
if this were not the case we could find two cycles $c_1$,$c_2$ with linearly independent homotopy classes that
are not zero. Both cycles can be seen as $h(c_1)\Tr c_1$ and $h(c_2)\Tr c_2$ where $\Tr c_1,\Tr c_2 \in R$. 
If $v_1$ and $v_2$ are two vertices then we can look at $v_1\Tr c_1$ and $v_2\Tr c_2$.
These are two cycles, they are nonzero and because the homotopy classes are linearly independent, they must intersect.
This means that there is a path of nonzero arrows between $v_1$ and $v_2$. As this holds for
every couple of vertices, the representation must be simple.

Two zero cycles ($\ell$ and the one with nontrivial homology) with different homology generate an ideal which defines a subscheme of codimension $2$.
\end{proof}
\begin{remark}
A different proof of this statement can be found in \cite{broomhead}.
\end{remark}
\begin{remark}
A dimer model on a higher genus surface can never be an NCCR because it is not an order.
A dimer model on a sphere can never be an NCCR because even if it were an order, its center is $\C[\ell]$, which is smooth so the NCCR should be equal to $\C[\ell]$.
\end{remark}

\section{Summary}

The following theorem is a summary of all main theorems from the previous sections:
\begin{theorem}
For a dimer model $\qpol$ on a torus the following are equivalent:
\begin{enumerate}
\item $A_\qpol$ is cancellation.
\item $A_\qpol$ is algebraically consistent.
\item $A_\qpol$ is an order.
\item $A_\qpol$ is an NCCR of its center.
\item The zig and zag rays in the universal cover do not intersect twice.
\item There exists a consistent $\carr$-charge.
\end{enumerate}
\end{theorem}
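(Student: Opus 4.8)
The plan is to use \emph{cancellation}, condition~(1), as a central hub to which each of the remaining five conditions is directly tied by one of the theorems established in the preceding sections; transitivity of logical equivalence then delivers the full statement. Since $\qpol$ is a dimer model on a torus we have $\chi_\qpol = 0$, so in particular $\chi_\qpol \le 0$, and every hypothesis of the form ``$\chi_\qpol \le 0$'' or ``$\chi_\qpol = 0$'' occurring in those theorems is automatically met. Thus no hypothesis-checking beyond this single observation will be needed.

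Concretely, I would assemble the spokes one at a time. The equivalence (1)~$\Leftrightarrow$~(5) is Theorem~\ref{zigzag}, once one notes that condition~(5) is a restatement of condition~Z. The equivalence (1)~$\Leftrightarrow$~(6) is precisely Theorem~\ref{rcharge}, and (1)~$\Leftrightarrow$~(2) is Theorem~\ref{algcon}. The equivalence (1)~$\Leftrightarrow$~(4) is the theorem of the section on noncommutative crepant resolutions. Finally, (2)~$\Leftrightarrow$~(3) is the theorem of the section on orders; combining it with the already-established (1)~$\Leftrightarrow$~(2) yields (1)~$\Leftrightarrow$~(3). Each of conditions (2)--(6) is now linked to (1), so the six statements are pairwise equivalent and the proof is complete.

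The honest assessment is that there is no genuine obstacle at this stage: all of the mathematical content has been carried out in the earlier sections, and the summary theorem is bookkeeping. The only point requiring a moment's care is ensuring that the torus hypothesis supplies exactly the $\chi_\qpol$ condition demanded by each cited result, which it does since $\chi_\qpol = 0$. If anything is worth emphasising, it is the structural remark that cancellation is the natural pivot — every other notion was proved equivalent to it rather than to one another — so that the cleanest exposition is the hub-and-spoke argument above rather than attempting a single cyclic chain of implications.
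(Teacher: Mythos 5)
Your proposal is correct and is exactly what the paper intends: the summary theorem is stated without a separate proof precisely because it is the conjunction of Theorem~\ref{zigzag} ((1)$\Leftrightarrow$(5)), Theorem~\ref{rcharge} ((1)$\Leftrightarrow$(6)), Theorem~\ref{algcon} ((1)$\Leftrightarrow$(2)), the order theorem ((2)$\Leftrightarrow$(3)) and the NCCR theorem ((1)$\Leftrightarrow$(4)), all of whose hypotheses are met since $\chi_\qpol=0$. Your hub-and-spoke assembly around cancellation is the same bookkeeping the paper performs.
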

The remarks following these proofs show that 
in the higher genus case this theorem changes to:
\begin{theorem}
For a dimer model $\qpol$ on a higher genus surface the following are equivalent:
\begin{enumerate}
\item $A_\qpol$ is cancellation.
\item The zig and zag rays in the universal cover do not intersect twice.
\end{enumerate}
While the following can never happen
\begin{enumerate}
\item There exists a consistent $\carr$-charge.
\item $A_\qpol$ is algebraically consistent.
\item $A_\qpol$ is an order.
\item $A_\qpol$ is an NCCR of its center.
\end{enumerate}
\end{theorem}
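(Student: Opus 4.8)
The plan is to assemble both the equivalence and the list of impossibilities from results already established, using only the fact that a higher genus surface $\Sigma$ has $\chi_\qpol = 2-2g \le -2 < 0$, so that every theorem proved under the hypothesis $\chi_\qpol \le 0$ applies.

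First I would dispatch the equivalence of (1) and (2). Since $\chi_\qpol < 0 \le 0$, Theorem \ref{zigzag} applies verbatim: $A_\qpol$ is a cancellation algebra if and only if condition Z holds, i.e.\ for every arrow $a \in \tilde Q_1$ the zig ray $\vec\pZ^+_a$ and zag ray $\vec\pZ^-_a$ meet only in $a$. This is precisely the statement that the zig and zag rays in the universal cover do not intersect twice, so (1) and (2) are equivalent with no further work.

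Next I would rule out the four conditions claimed never to occur. For the consistent $\carr$-charge, I would simply reuse the computation in the $\carr$-charge section showing that consistency forces $2\chi_\qpol = 0$; since $\chi_\qpol < 0$ here, no consistent $\carr$-charge can exist. For being an order, the key obstruction is as follows: an order is prime, hence cancellation, so the localisation $\hat A_\qpol$ at the powers of $\ell$ would again have to be module-finite over its centre. The point I would argue is that $\hat A_\qpol/(\ell-1)$ is Morita equivalent to the group algebra $\C[\pi_1(\Sigma)]$ of the genus $g$ surface group: after inverting $\ell$ and specialising $\ell=1$, every path is determined up to equivalence by its homotopy class, and collapsing to a single base vertex yields the group algebra of $\pi_1(\Sigma)$. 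Since $\pi_1(\Sigma)$ is a non-elementary group for $g \ge 2$, this group algebra is not finite over its centre, giving the required contradiction. Finally, algebraic consistency and the NCCR property each entail being an order: algebraic consistency means $\tau \colon A_\qpol \to B_\qpol$ is an isomorphism and $B_\qpol$ is by construction prime and module-finite over its centre, while an NCCR is an order by definition. Both therefore contradict the impossibility just established.

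The main obstacle is the order step, and specifically the identification of $\hat A_\qpol/(\ell-1)$ up to Morita equivalence with the surface group algebra together with the verification that the latter fails to be module-finite over its centre. Everything else in the proof is a direct citation of the numbered theorems from the previous sections and the elementary sign of the Euler characteristic.
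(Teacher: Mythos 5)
Your proposal is correct and follows essentially the same route as the paper, which assembles this theorem from Theorem \ref{zigzag} (valid for all $\chi_\qpol\le 0$), the Euler characteristic computation showing consistency of an $\carr$-charge forces $\chi_\qpol=0$, and the remark that an order would force $\hat A_\qpol/(\ell-1)$, Morita equivalent to the surface group algebra, to be module-finite over its centre. Your added detail on why that Morita equivalence holds and why the group algebra of a genus $\ge 2$ surface group fails to be finite over its centre (trivial centre, infinite dimension) merely fleshes out what the paper asserts without proof.
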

In the genus zero case we have got:
\begin{theorem}
For a dimer model $\qpol$ on a sphere the following are equivalent:
\begin{enumerate}
\item $A_\qpol$ is cancellation.
\item $A_\qpol$ is algebraically consistent.
\item $A_\qpol$ is an order.
\end{enumerate}
While the following can never happen
\begin{enumerate}
\item $A_\qpol$ is CY-3. 
\item The zig and zag rays in the universal cover do not intersect twice.
\item There exists a consistent $\carr$-charge.
\item $A_\qpol$ is an NCCR of its center.
\end{enumerate}
\end{theorem}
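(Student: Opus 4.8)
The plan is to read this summary off the per-notion results already established and to isolate the single genuinely new point, the impossibility of CY-3 on the sphere. For the three conditions that are to be equivalent (cancellation, algebraic consistency, order) I would first settle cancellation $\Leftrightarrow$ order as in the sphere remark of the Orders section: if $A_\qpol$ is cancellation then on the sphere every path equals $\ell^k p'$ with $p'$ non-self-intersecting, and a finite quiver on $S^2$ admits only finitely many non-self-intersecting paths, so $A_\qpol$ is module-finite over $\C[\ell]$; together with primeness (which passes from $\hat A_\qpol=\Mat_{|Q_0|}(\C[\ell^{\pm 1}])$ through the flat localization at the non-zero-divisor $\ell$) this makes it an order. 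Conversely an order is prime, and in a prime ring the nonzero central element $\ell$ is a non-zero-divisor; from $pa=qa$, multiplying on the right by the complementary path of $a$ gives $p\ell=q\ell$, hence $p=q$, so cancellation holds.

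For algebraic consistency I would invoke Theorem \ref{RV} together with the lemma identifying $B_\qpol$ with the quotient $\Eqv(Q,\alpha)/\{f:f(\simples(A_\qpol,\alpha))=0\}$. The feature special to the sphere is that the center is $\C[\ell]$ and the specialization $A_\qpol\otimes_{\C[\ell]}\C[\ell]/(\ell-1)=\Mat_{|Q_0|}(\C)$ (at $\ell=1$ the groupoid of $\hat A_\qpol$ becomes the indiscrete groupoid on $Q_0$, since $S^2$ is simply connected), so the simple there has dimension vector $(1,\dots,1)$; this is exactly the $\alpha$ of Theorem \ref{RV}, and the two descriptions of $A_\qpol$ coincide, yielding $\tau:A_\qpol\cong B_\qpol$. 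In the reverse direction algebraic consistency implies cancellation for free, as $B_\qpol$ is a subalgebra of the cancellation algebra $\Mat_{|Q_0|}(\C[M])$, which closes the cycle among the three conditions.

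Three of the four impossibilities follow immediately from statements proved in the body. The zigzag condition Z cannot hold because for $\chi_\qpol>0$ the universal cover is the finite quiver itself, so the infinite zig and zag rays from any arrow are forced to reintersect (the remark after Theorem \ref{zigzag}). A consistent $\carr$-charge cannot exist because conditions R1 and R2 force $\chi_\qpol=0$, whereas on the sphere $\chi_\qpol=2$. And $A_\qpol$ cannot be an NCCR: an NCCR is an order, but the center here is the regular ring $\C[\ell]$, and an NCCR of a regular ring is Morita-trivial, contradicting that $A_\qpol\otimes_R K=\Mat_{|Q_0|}(K)$ with $|Q_0|>1$.

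The main obstacle is the impossibility of CY-3, which cannot be reduced to cancellation since the implication from CY-3 to cancellation is not available. Here I would argue topologically, in the spirit of Theorem \ref{canc} and of the Observation on infinite CY-3 models: the CY-3 condition is the exactness of the self-dual complex $\cC^\bullet$, and after the $G$-homogeneous decomposition each homogeneous piece of $\cC^\bullet\otimes_{A_\qpol}S_v$ is the (augmented, dualized) cellular chain complex of the universal cover $|\tilde\qpol|$. For $\chi_\qpol\le 0$ this cover is contractible, which is precisely why the pieces are acyclic and $A_\qpol$ is CY-3; but on the sphere $|\tilde\qpol|=|\qpol|=S^2$ is its own universal cover, compact with $\tilde H_2(S^2)\neq 0$. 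This nonvanishing top homology survives in the homogeneous component recording the fundamental $2$-cycle (the sum of all faces), so $\cC^\bullet$ is not exact and $A_\qpol$ is not CY-3. The delicate part I expect here is making the identification of the homogeneous pieces with the cellular complex of $S^2$ precise enough to locate exactly where the extra homology lands; the octahedron dimer of the opening Example, which is an order but not CY-3, serves as a concrete consistency check.
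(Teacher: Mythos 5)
Your handling of the three equivalences and of the zigzag, $\carr$-charge and NCCR impossibilities follows the paper's own route (the remarks closing the Orders and NCCR sections, the remark after Theorem \ref{zigzag}, and the Euler characteristic computation in the $\carr$-charge section), and the extra details you supply -- primeness via $\hat A_\qpol=\Mat_{|Q_0|}(\C[\ell^{\pm1}])$, and cancellation from primeness by right-multiplying $pa=qa$ with the complementary path of $a$ and using that a nonzero central element of a prime ring is regular -- are correct. One small slip in the NCCR step: ``Morita-trivial'' does not contradict $A_\qpol\otimes_R K=\Mat_{|Q_0|}(K)$, since $\Mat_{|Q_0|}(\C[\ell])$ is itself an NCCR of $\C[\ell]$ with exactly that generic fibre. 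The point you need is that a reflexive module over the PID $\C[\ell]$ is free, so an NCCR would force $A_\qpol\cong\Mat_n(\C[\ell])$, whereas $A_\qpol$ is ramified at $\ell=0$ (it surjects onto $\C^{Q_0}$ with $|Q_0|\ge 2$ after killing the arrows). This is easily repaired and is the intent of the paper's remark.

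The genuine gap is the CY-3 impossibility, which is also the one item the paper itself never argues in the text (it is asserted in the first Example and in the summary, presumably on the strength of \cite{bocklandtqp}). Your identification of the $G$-homogeneous components of $\cC^\bullet\otimes_{A_\qpol}S_v$ with the cellular chain complex of $|\tilde\qpol|=S^2$ is not correct as stated. After tensoring with the left module $S_v$ only the summands $F_p$ with $t(p)=v$ survive, so the component in degree $g\in G$ involves just: the class $g$ itself, the arrows $b$ with $t(b)=v$ and $gb^{-1}$ effective, the relations $r_a$ with $h(a)=v$ and $ga\ell^{-1}$ effective, and the class $g\ell^{-1}$. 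For $g=v\ell^k$ with $k\ge 1$ its dimensions are $1$, $\#\{b:t(b)=v\}$, $\#\{a:h(a)=v\}$, $1$, whose alternating sum vanishes because in- and out-degrees agree at every vertex of a dimer quiver. So there is no homogeneous piece ``recording the fundamental $2$-cycle'' of $S^2$, and the nonexactness (which must occur somewhere, since $\cC^\bullet$ is self-dual by construction and the octahedron is not CY-3) is not located by your argument. Closing this step requires either a genuinely finer analysis of these components, or a different argument entirely -- for instance that a CY-3 algebra has $\Ext^3_A(S_v,A)\ne 0$ and the Hilbert-series constraints this imposes on a graded algebra are incompatible with being a finite module over the one-dimensional ring $\C[\ell]$, or a direct appeal to the classification results of \cite{bocklandtqp}. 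As written, this part of your proof does not close.
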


\bibliographystyle{amsplain}
\def\cprime{$'$}
\providecommand{\bysame}{\leavevmode\hbox to3em{\hrulefill}\thinspace}
\providecommand{\MR}{\relax\ifhmode\unskip\space\fi MR }
\providecommand{\MRhref}[2]{%
  \href{http://www.ams.org/mathscinet-getitem?mr=#1}{#2}
}
\providecommand{\href}[2]{#2}

\end{document}